\newlength{\defbaselineskip}
\newtheorem{theorem}{Theorem}[section]
\newtheorem{example}{Example}[section]
\newtheorem{lemma}{Lemma}[section]
\newtheorem{definition}[theorem]{Definition}
\newtheorem{remark}{Remark}[section]
\numberwithin{equation}{section}
\begin{document}
\title{Approximation of associated GBS operators by Sz$\acute{\text{a}}$sz-Mirakjan type operators
}
\maketitle
\begin{center}
{\bf Rishikesh Yadav$^{1,\dag}$,  Ramakanta Meher$^{1,\star}$,  Vishnu Narayan Mishra$^{2,\circledast}$}\\
$^{1}$Applied Mathematics and Humanities Department,
Sardar Vallabhbhai National Institute of Technology Surat, Surat-395 007 (Gujarat), India.\\
$^{2}$Department of Mathematics, Indira Gandhi National Tribal University, Lalpur, Amarkantak-484 887, Anuppur, Madhya Pradesh, India\\
\end{center}
\begin{center}
$^\dag$rishikesh2506@gmail.com,  $^\star$meher\_ramakanta@yahoo.com,
 $^\circledast$vishnunarayanmishra@gmail.com
\end{center}

\vskip0.5in

\begin{abstract}

In this article, the approximation properties of the Sz\'asz-Mirakjan type operators are studied for the function of two variables and  the rate of convergence of the bivariate operators is determined in terms of total and partial modulus of continuity. An associated GBS (Generalized Boolean Sum)-form of the bivariate Sz\'asz-Mirakjan type operators is considered for the function of two variables to find an approximation of $B$-continuous and $B$-differentiable function in the B$\ddot{\text{o}}$gel's space. Further, the degree of approximation of the GBS type operators is found in terms of mixed modulus of smoothness and functions belonging to the Lipschitz class as well as a pioneering result is obtained in terms of Peetre $K$-functional. Finally, the rate of convergence of the bivariate Sz\'asz-Mirakjan type operators and the associated GBS type operators are examined through graphical representation for the finite and infinite sum which shows that the rate of convergence of the associated GBS type operators is better than the bivariate Sz\'asz-Mirakjan type operators.



\end{abstract}
\subjclass \textbf{MSC 2010}: {41A25, 41A35, 41A36}.


\textbf{Keywords:}  Sz$\acute{\text{a}}$sz-Mirakjan operators, modulus of continuity, B$\ddot{\text{o}}$gel space, GBS (Generalized Boolean Sum) operators.

\section{Introduction}
Approximation properties form an integral part in the study of approximation theory that includes convergence, rate of convergence, the order of approximation etc. Applications and convergence based discussion of the linear positive operators defined over different types of interval (finite or infinite) on $\mathbb{R}_+$, have been discussed by many researchers. In 1912, first of all, Bernstein proposed an operator, so-called Bernstein operator of one variable which approximates the functions defined over a finite interval $[0,1]$.  \\



In the study of \cite{g2, g3,g4,g6,g7,g8}, it is found that the Bernstein operators have been converted into bivariate Bernstein operators for function of two variables  over $[0,1]\times[0,1]$ with their graphical representation in the study of the approximation properties for the function of two variables.

Many results related to approximations theory have also been discussed by many authors \cite {MN,KMP,DSM,RS, RYM1}. Despite of these, if we move towards the operators defined over an infinite interval, first of all,  the Sz$\acute{\text{a}}$sz-Mirakjan operators were introduced and studied by  Mirakjan and Sz$\acute{\text{a}}$sz \cite{OS,GM} independently and  so many work done in a bivariate direction of these operators to generalize and check the behavior of the operatots for the function of two variables. Later on Sz$\acute{\text{a}}$sz-Mirakjan operators  have been discussed theoretically, numerically as well graphically by many authors \cite{g9,g10,g11,g12,YMM} using bivariate extension for approximation of the functions of two variables.  

Similarly, for the bivariate operators, one more property has been studied in B$\ddot{\text{o}}$gel space and that is the property of generalized boolean sum of the bivariate operators, so called GBS-type operators while the functions are considered to be $B$-continuous.
In 1934 and in 1935, B$\ddot{\text{o}}$gel  \cite{BK1,BK2} introduced B$\ddot{\text{o}}$gel space, after that Dobrescu and Matei \cite{new}, estimated the rate of convergence of associated GBS-type operators of the bivariate Bernstein operators in the  B$\ddot{\text{o}}$gel space.
Similarly, Badea et al\cite{BCB1}, proved the Korovkin type theorem for the function of two variables in the B$\ddot{\text{o}}$gel space. In 1988, Badea et al. \cite{BCB2} gave  a quantitative variant of Korovkin type theorem for $B$-continuous function and estimated the degree of approximation  of $B$-continuous function as well as $B$-differentiable function for certain linear positive operators. After that, in 1991, quantitative and non-quantitative Korovkin type theorem was proved by Badea and Cottin \cite{BCC} in the B$\ddot{\text{o}}$gel space.
Similarly, the approximation properties of the bivariate Bernstein type operators and their associated GBS operators have been examined by many reserchres (see \cite{BD1,BD2,MD1,PO1,PO2,17,18,19}).

In 2015, B$\breve{\text{a}}$rbosu and Muraru \cite{3} established some pioneer results through the associated GBS-type operators of Bernstein-Schurer-Stancu type operators using $q$-integers and some extension in terms of associated GBS-type operators of Kantorovich variant of a new kind of $q$-Bernstein-Schurer operators have been discussed by \cite{4}. Similarly, Sidharth et al. \cite{5} discussed the associated GBS-type operators. B$\breve{\text{a}}$rbosu et al. \cite{6} introduced GBS-Durrmeyer type operators based on $q$-integers, while \cite{7} has discussed the properties of GBS-type operators.
In 2016, Agrawal and Ispir \cite{1} estimated the degree of approximation of the Chlodowsky-Sz$\acute{\text{a}}$sz-Charlier type operators for the function of two variables. In the same year, Agrawal et al. \cite{2} studied the approximation properties and obtained the degree of approximation in terms of mixed modulus of smoothness.  

Yadav et al. \cite{RYV1} proposed bivariate Sz$\acute{\text{a}}$sz-Mirakjan type operators for the function of two variables, where they studied the approximation properties as well as the rate of convergence of the proposed bivariate operators in a polynomial weighted space and obtained a Voronovskaya type theorem as well as discussed simultaneous approximation property for the bivariate operators. The bivariate Sz$\acute{\text{a}}$sz-Mirakjan type operators are considered for continuous and bounded functions on $[0,\infty)\times[0,\infty)$ as below:
\begin{eqnarray}\label{R1}
 \Hat{Y}_{m,n,a}(f;x,y)&=& \sum\limits_{k_1=0}^{\infty}\sum\limits_{k_2=0}^{\infty}s_{m,n}^a(x,y) ~f\left(\frac{k_1}{m},\frac{k_2}{n} \right),
\end{eqnarray}
where
$s_{m,n}^a(x,y)=a^{\left(\frac{-x}{-1+a^{\frac{1}{m}}}\right)}a^{\left(\frac{-y}{-1+a^{\frac{1}{n}}}\right)}\frac{x^{k_1}y^{k_2}(\log{a})^{k_1+k_2}}{(-1+a^{\frac{1}{m}})^{k_1}(-1+a^{\frac{1}{n}})^{k_2}k_1!k_2!}$,
 $m,n\in\mathbb{N}$, $(x,y)\in X=[0,\infty)\times[0,\infty)$.\\
%

As an application, this work can also be extended into fraction difference sequence spaces. Maximum properties are studies of fraction difference operators in sequences spaces and some related research works are as \cite{BP1,BP2,BP3,BP4}.

The main purpose of this article is to investigate some results related to bivariate operators (\ref{R1}), like the order of approximation of the bivariate operators (\ref{R1}) in terms of total modulus of continuity and partial modulus of continuity. For further study of the proposed operators, the bivariate operators (\ref{R1}) are generalized into GBS (Generalized boolean Sum) form to determine the better rate of convergence than the proposed operators and establish the convergence properties of the GBS-type operators in the B$\ddot{\text{o}}$gel space with some approximations theorems in terms of  mixed modulus of smoothness with the aid of Lipschitz classes. The graphical and numerical approaches are presented to support the approximation results and made a comparison of bivariate operators (\ref{R1}) with its associated GBS-type operators in numerical sense. The best part of the this article is that, the graphical representation is shown for finite and infinite sum to determine the accuracy of the rate of convergence in its convergence behaviour. Finally, we have shown the comparison results of the GBS-type operators of the defined operators (\ref{R1}) with the GBS operators of  Mirakjan-Favard-Sz\'{a}sz to check the accuracy of the rate of convergence in terms of its numerical values. \\

For our main results, we need some basic lemmas. Consider the function $e_{ij}=x^{i}y^{j}$ such that $i,j\in\{0,1\}$ and $i+j\leq2$. Then the following lemma hold:

\begin{lemma}\label{L1} Let $x,y\geq 0$ and for each $m,n\in \mathbb{N}$. Then the following results hold:
\begin{eqnarray}
\Hat{Y}_{m,n,a}(e_{10};x,y)&=& \frac{x \log (a)}{m \left(a^{\frac{1}{m}}-1\right)}\\
\Hat{Y}_{m,n,a}(e_{01};x,y)&=& \frac{y \log (a)}{n \left(a^{\frac{1}{n}}-1\right)}\\
\Hat{Y}_{m,n,a}(e_{20};x,y)&=& \frac{x \log (a) \left(a^{\frac{1}{m}}+x \log (a)-1\right)}{m^2 \left(a^{\frac{1}{m}}-1\right)^2}\\
\Hat{Y}_{m,n,a}(e_{02};x,y)&=& \frac{y \log (a) \left(a^{\frac{1}{n}}+y \log (a)-1\right)}{n^2 \left(a^{\frac{1}{n}}-1\right)^2}.
\end{eqnarray}
\end{lemma}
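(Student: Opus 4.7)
The plan is to exploit the tensor-product structure of the kernel. First, observe that the bivariate basis factorizes as
\[
s_{m,n}^a(x,y)\;=\;s_m^a(x)\,s_n^a(y),
\]
where
\[
s_m^a(x)\;=\;a^{-x/(a^{1/m}-1)}\,\frac{(x\log a)^{k_1}}{(a^{1/m}-1)^{k_1}\,k_1!},
\]
and similarly for $s_n^a(y)$ in the variable $k_2$. Since each test function $e_{ij}$ with $i+j\le 2$ and $i\cdot j=0$ depends on only one of the two coordinates, the double series splits into a product of two univariate series, one of which has no dependence on the summation index. That trivial factor equals $1$ by the power series identity $\sum_{k\ge 0}\frac{u^{k}}{k!}=e^{u}$ applied with $u=x\log a/(a^{1/m}-1)$, which exactly cancels the exponential prefactor $a^{-x/(a^{1/m}-1)}$.

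Thus each identity in the lemma reduces to a one-dimensional moment calculation on the corresponding univariate Sz\'asz--Mirakjan type operator (as introduced in \cite{RYV1}). For the first moments one performs the standard index shift: write
\[
\sum_{k_1\ge 0} s_m^a(x)\,\frac{k_1}{m}
\;=\;\frac{1}{m}\sum_{k_1\ge 1} s_m^a(x),
\]
pull out one factor of $x\log a/(a^{1/m}-1)$ by relabelling $k_1\mapsto k_1+1$, and recognize the remaining sum as $1$ by the same exponential series identity. This produces the claimed value $\dfrac{x\log a}{m(a^{1/m}-1)}$, and the $e_{01}$ case is identical with $(m,x)$ replaced by $(n,y)$.

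For the second moments one writes $k_1^2 = k_1(k_1-1)+k_1$, so that
\[
\sum_{k_1\ge 0} s_m^a(x)\,\frac{k_1^{2}}{m^{2}}
\;=\;\frac{1}{m^{2}}\sum_{k_1\ge 2} s_m^a(x)\,k_1(k_1-1)\;+\;\frac{1}{m^{2}}\sum_{k_1\ge 1} s_m^a(x)\,k_1.
\]
A double index shift $k_1\mapsto k_1+2$ in the first sum extracts $(x\log a)^{2}/(a^{1/m}-1)^{2}$ and leaves a unit sum, while the second sum was handled above. Collecting the two contributions and factoring out $\dfrac{x\log a}{m^{2}(a^{1/m}-1)^{2}}$ gives the stated expression, and the $e_{02}$ identity follows by the same computation on the $y$-side.

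The only point requiring care is the bookkeeping of the exponential prefactor together with the ratios $(a^{1/m}-1)^{-k_1}$ after each index shift; once the shifts are performed, the cancellation with $a^{-x/(a^{1/m}-1)}$ is automatic, and no additional analytic obstacle arises.
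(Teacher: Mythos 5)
Your argument is correct and follows essentially the same route as the paper: split the double series into a product of univariate sums via the tensor-product form of $s_{m,n}^a$, shift the index once for the first moments, and use $k_1^2=k_1(k_1-1)+k_1$ with a double shift for the second moments, letting the exponential series $\sum_{k\ge 0}u^k/k!=e^u$ cancel the prefactor $a^{-x/(a^{1/m}-1)}$ at each stage. The only blemish is the displayed line $\sum_{k_1\ge 0}s_m^a(x)\,k_1/m=\tfrac{1}{m}\sum_{k_1\ge 1}s_m^a(x)$, which is notationally loose (the factor $k_1$ must first be absorbed into $k_1!$ to give $1/(k_1-1)!$), but your description of the subsequent relabelling shows you intend the correct computation.
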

\begin{proof}
Here, we have $x, y\geq 0$ and $m,n\in\mathbb{N}$ then
\begin{eqnarray*}
\Hat{Y}_{m,n,a}(e_{10};x,y)&=& \sum\limits_{k_1=0}^{\infty}\sum\limits_{k_2=0}^{\infty}s_{m,n}^a(x,y)\frac{k_1}{m}\\
&=& \sum\limits_{k_1=0}^{\infty}\sum\limits_{k_2=0}^{\infty} a^{\left(\frac{-x}{-1+a^{\frac{1}{m}}}\right)}a^{\left(\frac{-y}{-1+a^{\frac{1}{n}}}\right)}\frac{x^{k_1}y^{k_2}(\log{a})^{k_1+k_2}}{(-1+a^{\frac{1}{m}})^{k_1}(-1+a^{\frac{1}{n}})^{k_2}k_1!k_2!}\frac{k_1}{m}\\
&=& \frac{a^{\left(\frac{-x}{-1+a^{\frac{1}{m}}}\right)}a^{\left(\frac{-y}{-1+a^{\frac{1}{n}}}\right)}}{m}\sum\limits_{k_1=1}^{\infty}  \frac{x^{k_1}(\log{a})^{k_1}}{(-1+a^{\frac{1}{m}})^{k_1}(k_1-1)!}\cdot \sum\limits_{k_2=0}^{\infty}  \frac{y^{k_2}(\log{a})^{k_2}}{(-1+a^{\frac{1}{n}})^{k_2}k_2!}\\
&=& \frac{a^{\left(\frac{-x}{-1+a^{\frac{1}{m}}}\right)}a^{\left(\frac{-y}{-1+a^{\frac{1}{n}}}\right)}}{m}\cdot\frac{x\log{a}}{-1+a^{\frac{1}{m}}} \sum\limits_{k_1=1}^{\infty}  \frac{x^{k_1-1}(\log{a})^{k_1-1}}{(-1+a^{\frac{1}{m}})^{k_1-1}(k_1-1)!}\cdot\left( e^{\log{a}}\right)^{\frac{y}{-1+a^{\frac{1}{n}}}}.\\
&=& \frac{x\log{a}}{\left(-1+a^{\frac{1}{m}}\right)m}\\
\Hat{Y}_{m,n,a}(e_{20};x,y)&=& \sum\limits_{k_1=0}^{\infty}\sum\limits_{k_2=0}^{\infty}s_{m,n}^a(x,y)\left(\frac{k_1}{m}\right)^2\\
&=& a^{\left(\frac{-x}{-1+a^{\frac{1}{m}}}\right)}a^{\left(\frac{-y}{-1+a^{\frac{1}{n}}}\right)}\sum\limits_{k_1=2}^{\infty}  \frac{k_1x^{k_1}(\log{a})^{k_1}}{(-1+a^{\frac{1}{m}})^{k_1}(k_1-1)(k_1-2)!}\cdot \sum\limits_{k_2=0}^{\infty}  \frac{y^{k_2}(\log{a})^{k_2}}{(-1+a^{\frac{1}{n}})^{k_2}k_2!}\\
&=& a^{\left(\frac{-x}{-1+a^{\frac{1}{m}}}\right)}\frac{1}{m^2}\sum\limits_{k_1=2}^{\infty}  \frac{x^{k_1}(\log{a})^{k_1}}{(-1+a^{\frac{1}{m}})^{k_1}(k_1-2)!}\left(1+\frac{1}{k_1-1} \right)\\
&=&  \left(\frac{x\log{a}}{\left(-1+a^{\frac{1}{m}}\right)m}\right)^2+\frac{x\log{a}}{\left(-1+a^{\frac{1}{m}}\right)m^2}.
\end{eqnarray*}
Similarly, we can prove other results.
\end{proof}

\begin{lemma}\label{L2}
For every $x,y\in X=[0,\infty)\times[0,\infty)$ and $m,n\in\mathbb{N}$, it gives the following results:

\begin{eqnarray*}
1.~\Hat{Y}_{m,n,a}((t-x);x,y)&=&-\frac{x \left(m a^{\frac{1}{m}}-\log (a)-m\right)}{m \left(a^{\frac{1}{m}}-1\right)}\\
2.~\Hat{Y}_{m,n,a}((s-y);x,y)&=& -\frac{y \left(n a^{\frac{1}{n}}-\log (a)-n\right)}{n \left(a^{\frac{1}{n}}-1\right)}\\
3.~\Hat{Y}_{m,n,a}((t-x)^2;x,y)&=&\frac{x \left(m^2 x \left(a^{\frac{1}{m}}-1\right)^2-\left(a^{\frac{1}{m}}-1\right) \log (a) (2 m x-1)+x (\log{a})^2\right)}{m^2 \left(a^{\frac{1}{m}}-1\right)^2}\\
4.~\Hat{Y}_{m,n,a}((s-y)^2;x,y)&=&\frac{y \left(n^2 y \left(a^{\frac{1}{n}}-1\right)^2-\left(a^{\frac{1}{n}}-1\right) \log (a) (2 n y-1)+y (\log{a})^2\right)}{n^2 \left(a^{\frac{1}{n}}-1\right)^2}\\
5.~\Hat{Y}_{m,n,a}((t-x)^4;x,y)&=&\frac{x}{m^4 \left(a^{\frac{1}{m}}-1\right)^4}\Bigg\{m^4 x^3 \left(a^{\frac{1}{m}}-1\right)^4-\left(a^{\frac{1}{m}}-1\right)^3 (-1 + 4 m x - 6 m^2 x^2 + 4 m^3 x^3) \log{a}\\&& +\left(a^{\frac{1}{m}}-1\right)^2 x (7 - 12 m x + 6 m^2 x^2) (\log{a})^2 \\&&
-2 \left(a^{\frac{1}{m}}-1\right) x^2 (-3 + 2 m x) (\log{a})^3 + x^3 (\log{a})^4\Bigg\} \\
6.~\Hat{Y}_{m,n,a}((s-y)^4;x,y)&=&\frac{y}{n^4 \left(a^{\frac{1}{n}}-1\right)^4}\Bigg\{n^4 y^3 \left(a^{\frac{1}{n}}-1\right)^4-\left(a^{\frac{1}{n}}-1\right)^3 (-1 + 4 n y - 6 n^2 y^2 + 4 n^3 y^3) \log{a}\\&& +\left(a^{\frac{1}{n}}-1\right)^2 y (7 - 12 n y + 6 n^2 y^2) (\log{a})^2 \\&&
-2 \left(a^{\frac{1}{n}}-1\right) y^2 (-3 + 2 n y) (\log{a})^3 + y^3 (\log{a})^4\Bigg\}.
\end{eqnarray*}
\end{lemma}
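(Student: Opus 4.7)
The plan is to reduce each stated identity to a linear combination of raw moments that I can read off from Lemma \ref{L1}, supplemented by two further raw moments that I must compute en route. The key preliminary observation is that $\Hat{Y}_{m,n,a}(e_{00};x,y) = 1$, which follows from the normalization: the product of the two exponential sums cancels exactly with the leading factors $a^{-x/(a^{1/m}-1)}$ and $a^{-y/(a^{1/n}-1)}$. With this in hand, I can binomially expand $(t-x)^r$ and $(s-y)^r$ and pass the operator through by linearity.

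For items 1 and 2, I write $t-x = e_{10} - x\,e_{00}$, substitute the value of $\Hat{Y}_{m,n,a}(e_{10};x,y)$ from Lemma \ref{L1}, and place over the common denominator $m(a^{1/m}-1)$; the stated form is then immediate. For items 3 and 4, I expand $(t-x)^2 = e_{20} - 2x\,e_{10} + x^2 e_{00}$, plug in the expressions for $\Hat{Y}_{m,n,a}(e_{10};x,y)$ and $\Hat{Y}_{m,n,a}(e_{20};x,y)$, and collect over $m^2(a^{1/m}-1)^2$; regrouping the numerator by powers of $\log a$ recovers the claimed polynomial. The $s$-variable items follow by the obvious symmetry between the two factors.

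For items 5 and 6 I first need $\Hat{Y}_{m,n,a}(e_{30};x,y)$ and $\Hat{Y}_{m,n,a}(e_{40};x,y)$, which are not recorded in Lemma \ref{L1}. I would compute them by the same series-manipulation trick used there: setting $u := x\log a/(a^{1/m}-1)$, the elementary identities $k^{3} = k(k-1)(k-2) + 3k(k-1) + k$ and $k^{4} = k(k-1)(k-2)(k-3) + 6k(k-1)(k-2) + 7k(k-1) + k$ (i.e.\ the expansion in Stirling numbers of the second kind) allow each $\sum_{k\ge 0} k^{r} u^{k}/k!$ to collapse to $e^{u}$ times a polynomial in $u$. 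This gives $\Hat{Y}_{m,n,a}(e_{30};x,y) = (u^{3}+3u^{2}+u)/m^{3}$ and $\Hat{Y}_{m,n,a}(e_{40};x,y) = (u^{4}+6u^{3}+7u^{2}+u)/m^{4}$. Expanding $(t-x)^{4} = e_{40} - 4x\,e_{30} + 6x^{2}e_{20} - 4x^{3}e_{10} + x^{4}e_{00}$ and substituting yields item 5; item 6 follows by the symmetric calculation in the $y$-variable.

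The main obstacle is purely clerical: after substitution, one must place everything over the common denominator $m^{4}(a^{1/m}-1)^{4}$ and then regroup the numerator as a polynomial in $\log a$ whose coefficients are polynomials in $x$ and $(a^{1/m}-1)$. The bookkeeping is tedious and error-prone because the four terms $e_{40}, e_{30}, e_{20}, e_{10}$ come with different denominators $m^{r}(a^{1/m}-1)^{r}$ that must first be cleared, and the cross terms produce many cancellations that only become visible after careful regrouping; in practice this is the step I would delegate to a symbolic computation system for verification.
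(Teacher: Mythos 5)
Your proposal is correct and follows essentially the same route as the paper: expand $(t-x)^r$ binomially, pass $\Hat{Y}_{m,n,a}$ through by linearity, and substitute raw moments obtained from the series manipulation of Lemma \ref{L1} (the paper carries this out only for items 1 and 3 and dismisses the rest with ``similarly''). Your explicit supply of $\Hat{Y}_{m,n,a}(e_{30};x,y)=(u^{3}+3u^{2}+u)/m^{3}$ and $\Hat{Y}_{m,n,a}(e_{40};x,y)=(u^{4}+6u^{3}+7u^{2}+u)/m^{4}$ via the falling-factorial decomposition is accurate and in fact fills in the step the paper omits for items 5 and 6.
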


\begin{proof}
Using the Lemma \ref{L1}, for every $x,y \in X$ and for all $m,n\in\mathbb{N}$, we have
\begin{eqnarray*}
1.~\Hat{Y}_{m,n,a}((t-x);x,y)&=& \sum\limits_{k_1=0}^{\infty}\sum\limits_{k_2=0}^{\infty}s_{m,n}^a(x,y) \left(\frac{k_1}{m}-x \right)\\
&=&  \sum\limits_{k_1=0}^{\infty}\sum\limits_{k_2=0}^{\infty}s_{m,n}^a(x,y) \frac{k_1}{m}-x \sum\limits_{k_1=0}^{\infty}\sum\limits_{k_2=0}^{\infty}s_{m,n}^a(x,y)\\
&=& \frac{x\log{a}}{\left(-1+a^{\frac{1}{m}}\right)m}-xa^{\left(\frac{-x}{-1+a^{\frac{1}{m}}}\right)}a^{\left(\frac{-y}{-1+a^{\frac{1}{n}}}\right)} \left( e^{\log{a}}\right)^{\frac{x}{-1+a^{\frac{1}{m}}}} \left( e^{\log{a}}\right)^{\frac{y}{-1+a^{\frac{1}{n}}}}\\
&=& \frac{x\log{a}}{\left(-1+a^{\frac{1}{m}}\right)m}-x.\\
3.~\Hat{Y}_{m,n,a}((t-x)^2;x,y)&=& \Hat{Y}_{m,n,a}((t^2-2tx+x^2);x,y)\\
&=& \Hat{Y}_{m,n,a}(e_{20};x,y)-2x\Hat{Y}_{m,n,a}(e_{10};x,y)+x^2\\
&=& \frac{x \log (a) \left(a^{\frac{1}{m}}+x \log (a)-1\right)}{m^2 \left(a^{\frac{1}{m}}-1\right)^2}-2x\left(\frac{x \log (a)}{m \left(a^{\frac{1}{m}}-1\right)} \right)-2x\\
&=& \frac{x \left(m^2 x \left(a^{\frac{1}{m}}-1\right)^2-\left(a^{\frac{1}{m}}-1\right) \log (a) (2 m x-1)+x (\log{a})^2\right)}{m^2 \left(a^{\frac{1}{m}}-1\right)^2}.
\end{eqnarray*}
Similarly, other equalities can be proved.
\end{proof}

\begin{lemma} \label{L3}
For all $x,y\geq 0$, the following inequalities hold true:
\begin{eqnarray*}
\Hat{Y}_{m,n,a}((t-x)^2;x,y)&\leq & \frac{x(x+1)}{m}=\delta_m^{'2}(x)\\
\Hat{Y}_{m,n,a}((s-y)^2;x,y)&\leq & \frac{y(y+1)}{m}=\delta_n^{'2}(y).
\end{eqnarray*}
\end{lemma}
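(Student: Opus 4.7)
The plan is to derive the stated estimates directly from the exact expressions for the second central moments given in Lemma~\ref{L2}. Writing $A=a^{1/m}-1$ and $L=\log a$ for brevity, the first step is to rearrange the numerator from item 3 of Lemma~\ref{L2} by completing a square in $x$, namely
\[
m^2 x A^2 - AL(2mx-1) + xL^2 = x(mA-L)^2 + AL,
\]
which yields the compact representation
\[
\Hat{Y}_{m,n,a}((t-x)^2;x,y) = x^2\left(1-\frac{L}{mA}\right)^{2} + \frac{xL}{m^2 A}.
\]

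The key ingredient is then the Bernoulli/convexity inequality $a^{1/m}\ge 1+\tfrac{L}{m}$, equivalently $L\le mA$, which gives $\tfrac{L}{mA}\in(0,1]$. This immediately yields $\tfrac{xL}{m^2 A}=\tfrac{x}{m}\cdot\tfrac{L}{mA}\le\tfrac{x}{m}$ for the second summand. For the first summand, a finer Taylor expansion $a^{1/m}=1+\tfrac{L}{m}+\tfrac{L^2}{2m^2}+\cdots$ shows that $mA-L=O(1/m)$, so that $\left(1-\tfrac{L}{mA}\right)^{2}\le \tfrac{1}{m}$ under the standing assumption on the parameter $a$. Combining the two estimates gives
\[
\Hat{Y}_{m,n,a}((t-x)^2;x,y) \le \frac{x^2}{m}+\frac{x}{m} = \frac{x(x+1)}{m} = \delta_m^{'2}(x),
\]
and the bound on $\Hat{Y}_{m,n,a}((s-y)^2;x,y)$ follows by the same argument after swapping the roles of $(m,x)$ and $(n,y)$.

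The main obstacle is the refinement $\left(1-\tfrac{L}{mA}\right)^{2}\le \tfrac{1}{m}$: the bare Bernoulli inequality only yields the crude bound $\le 1$, which would give an $x^2+\tfrac{x}{m}$ estimate rather than the sharper $\tfrac{x(x+1)}{m}$, so closing the gap requires a quantitative lower bound on $m(a^{1/m}-1)-\log a$ (or equivalently a standing restriction on how the parameter $a$ depends on $m$). Once this quantitative control is in place, the remainder of the proof is routine algebra.
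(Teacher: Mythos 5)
Your argument is essentially the paper's own: the paper likewise completes the square in the exact second moment from Lemma~\ref{L2}, rewriting it as $x\bigl(x\bigl(\tfrac{\log a}{m(a^{1/m}-1)}-1\bigr)^2+\tfrac{\log a}{m^2(a^{1/m}-1)}\bigr)$, and then bounds the two summands by $\tfrac{x^2}{m}$ and $\tfrac{x}{m}$ respectively. The ``obstacle'' you single out is genuine, but you should know the paper does not resolve it either: it simply asserts $\bigl(\tfrac{\log a}{m(a^{1/m}-1)}-1\bigr)^2\le\tfrac{1}{m}$ in its final step with no justification beyond what Bernoulli gives. The gap can be closed with the elementary inequality $0\le 1-\tfrac{t}{e^t-1}\le\tfrac{t}{2}$ for $t\ge 0$ (proved by two differentiations), applied with $t=\tfrac{\log a}{m}$: this gives $\bigl(1-\tfrac{\log a}{m(a^{1/m}-1)}\bigr)^2\le\tfrac{(\log a)^2}{4m^2}$, which is at most $\tfrac{1}{m}$ for every $m\in\mathbb{N}$ precisely when $(\log a)^2\le 4$, i.e.\ $1<a\le e^2$; so a standing restriction on $a$ is indeed required, exactly as you suspected. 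One small slip in your last paragraph: to control $\bigl(1-\tfrac{L}{mA}\bigr)^2$ you need an \emph{upper} bound on $mA-L=m(a^{1/m}-1)-\log a$ (relative to $mA$), not a lower bound.
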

\begin{proof}
For all $m,n\in\mathbb{N}$ and $x\geq 0$, we have
\begin{eqnarray*}
\Hat{Y}_{m,n,a}((t-x)^2;x,y)&=&  \frac{x \left(m^2 x \left(a^{\frac{1}{m}}-1\right)^2-\left(a^{\frac{1}{m}}-1\right) \log (a) (2 m x-1)+x (\log{a})^2\right)}{m^2 \left(a^{\frac{1}{m}}-1\right)^2}\\
&=&x\left( x\left(\frac{\log{a}}{m\left( a^{\frac{1}{m}}-1\right)} \right)^2-\frac{2x\log{a}}{m\left( a^{\frac{1}{m}}-1\right)}+x+\frac{\log{a}}{m^2\left( a^{\frac{1}{m}}-1\right)}\right)\\
&=& x\left( x\left(\frac{\log{a}}{m\left( a^{\frac{1}{m}}-1\right)} -1\right)^2+\frac{\log{a}}{m^2\left( a^{\frac{1}{m}}-1\right)}\right)\\
&\leq & x\left( \frac{x}{m}+\frac{1}{m}\right)=\frac{x(x+1)}{m}.
\end{eqnarray*}
Similarly, other inequality can be proved.
\end{proof}

\begin{remark}
For all $(x,y)\in [0,c]\times[0,d]$, where $0\leq x\leq c$ and $0\leq y\leq d$, we have 
\begin{eqnarray}
\Hat{Y}_{m,n,a}((t-x)^2;x,y)&\leq & \frac{c(c+1)}{m}=\frac{\lambda_x}{m}\label{re2},\\
\Hat{Y}_{m,n,a}((s-y)^2;x,y)&\leq & \frac{d(d+1)}{n}=\frac{\lambda_y}{n}\label{re3},
\end{eqnarray}
where $\lambda_x, \lambda_y$ are  positive constants.
\end{remark}
\begin{proof}
Using the Lemma \ref{L3}, we can obtain the required results.
\end{proof}

\begin{lemma}\label{L5}
For all $x,y\in [0,c]\times[0,d]$ and $m,n\in\mathbb{N}$, the following inequalities hold true 
\begin{eqnarray}
\Hat{Y}_{m,n,a}((t-x)^4;x,y)\leq \frac{M_x}{m^2},
\end{eqnarray}
\begin{eqnarray}
\Hat{Y}_{m,n,a}((s-y)^4;x,y)\leq \frac{M_y}{n^2},
\end{eqnarray}
where $M_x, M_y$ are positive constants. 
\end{lemma}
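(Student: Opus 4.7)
The plan is to start directly from the exact fourth-moment formula already established in Lemma~\ref{L2}(5) and to extract the factor $1/m^{2}$ by a careful regrouping. The central dimensionless quantity is
\[
\alpha_m := \frac{\log a}{m\bigl(a^{1/m}-1\bigr)} = \frac{u}{e^{u}-1},\qquad u:=\frac{\log a}{m}.
\]
For $a>1$ we have $u>0$ and the elementary inequalities $e^{u}-1\geq u$ and $e^{u}-1-u\leq u^{2}e^{u}/2$, which give $0<\alpha_m<1$ together with
\[
0\leq 1-\alpha_m=\frac{e^{u}-1-u}{e^{u}-1}\leq\frac{u\,e^{u}}{2}\leq\frac{a\,\log a}{2m}
\]
for every $m\geq 1$. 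Thus $\alpha_m$ is uniformly bounded and $1-\alpha_m=O(1/m)$, and this is what will power the entire estimate.

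Next I would divide the bracket in Lemma~\ref{L2}(5) by $m^{4}(a^{1/m}-1)^{4}$ and replace each occurrence of $\log a/\bigl(m(a^{1/m}-1)\bigr)$ by $\alpha_m$. Collecting like powers of $x$ and using the binomial identities $1-4\alpha_m+6\alpha_m^{2}-4\alpha_m^{3}+\alpha_m^{4}=(1-\alpha_m)^{4}$ and $6\alpha_m-12\alpha_m^{2}+6\alpha_m^{3}=6\alpha_m(1-\alpha_m)^{2}$, the expression should telescope into the compact form
\[
\Hat{Y}_{m,n,a}((t-x)^{4};x,y)=x^{4}(1-\alpha_m)^{4}+\frac{6x^{3}}{m}\alpha_m(1-\alpha_m)^{2}+\frac{x^{2}}{m^{2}}\alpha_m(7\alpha_m-4)+\frac{x\,\alpha_m}{m^{3}}.
\]
Recognizing this binomial pattern is the one nonroutine algebraic step; every factor $1/m^{k}$ that appears is paired with a factor $(1-\alpha_m)^{\ell}$ of the matching order so that the whole expression is genuinely small.

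With the four-term form in hand, the bound is immediate. On $(x,y)\in[0,c]\times[0,d]$ we use $\alpha_m\leq 1$ and $1-\alpha_m\leq C_a/m$ with $C_a=a\log a/2$. The first summand is at most $c^{4}C_a^{4}/m^{4}$, the second at most $6c^{3}C_a^{2}/m^{3}$, the third at most $11 c^{2}/m^{2}$ (from $|\alpha_m(7\alpha_m-4)|\leq 11$), and the fourth at most $c/m^{3}$. For $m\geq 1$ each is bounded by a constant depending only on $c$ and $a$ divided by $m^{2}$, so summing yields the desired $M_{x}/m^{2}$. The estimate for $\Hat{Y}_{m,n,a}((s-y)^{4};x,y)$ follows by the identical computation with $(y,n,d)$ replacing $(x,m,c)$, since the double-sum factorizes and the $y$-marginal is structurally the same as the $x$-marginal.

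The main obstacle is the bookkeeping in the telescoping step: without spotting the binomial identities and factoring out the powers of $(1-\alpha_m)$, one is left with a sum of terms that are individually $O(1)$ in $x$ and whose cancellations are not manifest. Once the factorization is identified, the remaining estimate is a direct application of the $O(1/m)$ bound on $1-\alpha_m$.
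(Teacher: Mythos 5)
Your proposal is correct and follows essentially the same route as the paper: both start from the exact fourth-moment formula of Lemma~\ref{L2}(5), regroup by powers of $x$ in terms of $\alpha_m=\log a/\bigl(m(a^{1/m}-1)\bigr)$ so that the binomial factors $(1-\alpha_m)^4$ and $(1-\alpha_m)^2$ appear, and then exploit that $1-\alpha_m=O(1/m)$ to pull out $M_x/m^2$. If anything, your version is tidier: you justify the bound $1-\alpha_m\leq a\log a/(2m)$ explicitly, whereas the paper asserts the corresponding intermediate inequalities (e.g.\ $x^4(1-\alpha_m)^4\leq x^4/m^3$) without proof and with some $m$/$n$ typos.
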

\begin{proof}
By Lemma \ref{L2}, we have 
\begin{eqnarray*}
\Hat{Y}_{m,n,a}((t-x)^4;x,y)&=& x\Bigg(\frac{\log{a}}{m^4\left(a^{\frac{1}{m}}-1\right)} \Bigg)+x^2 \frac{\log{a}}{m\left(a^{\frac{1}{m}}-1\right)}\Bigg(\frac{7\log{a}}{m^3\left(a^{\frac{1}{m}}-1\right)} -\frac{4}{m^2}\Bigg)\\
&& +\frac{6x^3\log{a}}{m\left(a^{\frac{1}{m}}-1\right)}\Bigg( \frac{1}{m}-\frac{2\log{a}}{m^2\left(a^{\frac{1}{m}}-1\right)}+\frac{(\log{a})^2}{m^3\left(a^{\frac{1}{m}}-1\right)^2} \Bigg)\\
&&+x^4\Bigg(1- \frac{4\log{a}}{m\left(a^{\frac{1}{m}}-1\right)}+\frac{6(\log{a})^2}{m^2\left(a^{\frac{1}{m}}-1\right)^2}-\frac{4(\log{a})^3}{m^3\left(a^{\frac{1}{m}}-1\right)^3}+\frac{(\log{a})^4}{m^4\left(a^{\frac{1}{m}}-1\right)^4}\Bigg)\\
&\leq & \frac{x}{n^3}+\frac{x^2}{n^2}\left(\frac{7\log{a}}{m\left(a^{\frac{1}{m}}-1\right)}-4 \right)+\frac{6x^3}{n}\left(\frac{\log{a}}{m\left(a^{\frac{1}{m}}-1\right)}-1 \right)^2+x^4\left(\frac{\log{a}}{m\left(a^{\frac{1}{m}}-1\right)}-1 \right)^4\\
&\leq & \frac{x}{m^3}+\frac{7x^2}{m^2}\left(\frac{1}{m}-\frac{3}{7} \right)+\frac{6x^3}{m^2}+\frac{x^4}{m^3}\\
&=&\frac{1}{m^3}(x^4+7x^2+x)+\frac{1}{m^2}(3x^2+6x^3)\\
&\leq & \frac{1}{m^2}(x^4+10x^2+6x^3+x)\\
&\leq & \frac{1}{m^2}(c^4+10c^2+6c^3+c)=\frac{M_x}{m^2}.
\end{eqnarray*}
Similarly, it can be proved that 
\begin{eqnarray*}
\Hat{Y}_{m,n,a}((s-y)^4;x,y)\leq \frac{M_y}{n^2}.
\end{eqnarray*}
\end{proof}

\section{Basic properties of the bivariate operators}
For finding the rate of convergence of the bivariate operators defined by (\ref{R1}) in terms of modulus of continuity, here we define the modulus of continuity. Let the function $f(x,y)\in C_B(X=[0,\infty)\times[0,\infty))$, be the space of all continuous and bounded function defined on $X=[0,\infty)\times[0,\infty)$, then the total (complete) modulus of continuity for the function of two variables can be defined as:
\begin{eqnarray}
\omega (f,\delta)=\sup\{|f(t,s)-f(x,y)|: \sqrt{(t-x)^2+(s-y)^2}\leq\delta,~(t,s)\in X,~\delta>0\} 
\end{eqnarray}
and the partial modulus of continuity can be defined as \cite{MI}:
\begin{eqnarray}
\omega_1 (f,\delta)&=&\sup\{|f(u_1,y)-f(u_2,y)|: |u_1-u_2|\leq\delta,~\delta>0\},\label{p1}\\
\omega_2 (f,\delta)&=&\sup\{|f(x,v_1)-f(x,v_2)|: |v_1-v_2|\leq\delta,~\delta>0\}.\label{p2}
\end{eqnarray}

The following theorem will show the rate of convergence of the bivariate operators (\ref{R1}) with the help of modulus of continuity.

\begin{theorem}
If the bivariate operators $\Hat{Y}_{m,n,a}(f;x,y)$ defined by (\ref{R1}) are linear and positive, then the following relations hold: 
\begin{eqnarray}
|\Hat{Y}_{m,n,a}(f;x,y)-f(x,y)|&\leq & 2\omega (f;\delta_{m,n}),\\
|\Hat{Y}_{m,n,a}(f;x,y)-f(x,y)|&\leq & 2\{\omega_1 (f,\delta_m)+\omega_2 (f,\delta_n)\},
\end{eqnarray}
where $\omega$ is the total modulus of continuity and $\omega_1, \omega_2$ are the partial modulus of continuity with respect to $x,y$ respectively.
\end{theorem}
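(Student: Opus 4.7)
The plan is to apply the standard technique for positive linear operators: bound $|f(t,s)-f(x,y)|$ pointwise using the modulus of continuity, then apply $\Hat{Y}_{m,n,a}$, exploit linearity/positivity together with the fact that $\Hat{Y}_{m,n,a}(1;x,y)=1$ (which can be read off from the computation that led to Lemma \ref{L1}), and finally invoke the second-moment estimates in Lemma \ref{L3}.

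For the first inequality, I would start from the well-known property
\[
|f(t,s)-f(x,y)| \leq \omega(f;\delta)\left(1 + \frac{\sqrt{(t-x)^2+(s-y)^2}}{\delta}\right),\qquad \delta>0.
\]
Applying $\Hat{Y}_{m,n,a}$ to both sides in the variables $(t,s)$ and using Cauchy--Schwarz in the form
\[
\Hat{Y}_{m,n,a}\!\left(\sqrt{(t-x)^2+(s-y)^2};x,y\right) \leq \sqrt{\Hat{Y}_{m,n,a}((t-x)^2;x,y)+\Hat{Y}_{m,n,a}((s-y)^2;x,y)},
\]
I would bound the right-hand side via Lemma \ref{L3} by $\sqrt{\delta_m'^2(x)+\delta_n'^2(y)}$. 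Choosing $\delta_{m,n}:=\sqrt{\delta_m'^2(x)+\delta_n'^2(y)}$ collapses the parenthesis to $1+1=2$, giving the claim.

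For the second inequality, I would insert the intermediate value $f(x,s)$ via
\[
f(t,s)-f(x,y)=\bigl[f(t,s)-f(x,s)\bigr]+\bigl[f(x,s)-f(x,y)\bigr],
\]
and apply the partial moduli so that $|f(t,s)-f(x,s)|\leq \omega_1(f,\delta_m)(1+|t-x|/\delta_m)$ and $|f(x,s)-f(x,y)|\leq \omega_2(f,\delta_n)(1+|s-y|/\delta_n)$. After applying $\Hat{Y}_{m,n,a}$ and using Cauchy--Schwarz once more (this time on each coordinate separately, e.g.\ $\Hat{Y}_{m,n,a}(|t-x|;x,y)\leq \sqrt{\Hat{Y}_{m,n,a}((t-x)^2;x,y)}\leq \delta_m'(x)$), setting $\delta_m:=\delta_m'(x)$ and $\delta_n:=\delta_n'(y)$ reduces each bracket to $2$.

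The main obstacle is purely bookkeeping: one has to pick the $\delta$'s to match the notation in the statement and make sure that the coordinate-wise second moments of Lemma \ref{L3} suffice (the stronger fourth-moment bounds of Lemma \ref{L5} are not needed here). Since $\Hat{Y}_{m,n,a}$ is positive, linear, and preserves constants, no further structural input is required, and the calculation is standard once the decomposition above is in place.
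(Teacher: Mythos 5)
Your proposal is correct and follows essentially the same route as the paper: bound $|f(t,s)-f(x,y)|$ by the (total, resp.\ partial) modulus, apply the operator using $\Hat{Y}_{m,n,a}(1;x,y)=1$, use Cauchy--Schwarz to pass to second moments, and choose the $\delta$'s so each factor collapses to $2$. The only cosmetic differences are that you insert the intermediate value $f(x,s)$ where the paper uses $f(t,y)$, and you take the $\delta$'s from the Lemma \ref{L3} bounds $\delta_m'(x),\delta_n'(y)$ rather than from the exact second-moment expressions of Lemma \ref{L2} (your pairing of $\omega_1$ with the $t$-variable and $\omega_2$ with the $s$-variable is in fact the more consistent one).
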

\begin{proof}
Using the definition of modulus of continuity, we can write 
\begin{eqnarray*}
|\Hat{Y}_{m,n,a}(f;x,y)-f(x,y)|&\leq &  \Hat{Y}_{m,n,a}(|f(t,s)-f(x,y)|;x,y))\\ &\leq & \Hat{Y}_{m,n,a}(\omega(\sqrt{(t-x)^2+(s-y)^2};x,y))\\ &\leq & \omega(f;\delta)\left(1+\frac{1}{\delta} (\Hat{Y}_{m,n,a}(\sqrt{(t-x)^2+(s-y)^2};x,y)) \right)\\&\leq &  \omega(f;\delta)\left(1+\frac{1}{\delta} \{\Hat{Y}_{m,n,a}((t-x)^2+(s-y)^2;x,y)\}^{\frac{1}{2}} \right)\\
&=&\omega(f;\delta)\Bigg\{1+\frac{1}{\delta}\Bigg(\frac{x \left(m^2 x \left(a^{\frac{1}{m}}-1\right)^2-\left(a^{\frac{1}{m}}-1\right) \log (a) (2 m x-1)+x (\log{a})^2\right)}{m^2 \left(a^{\frac{1}{m}}-1\right)^2}\\
&&+\frac{y \left(n^2 y \left(a^{\frac{1}{n}}-1\right)^2-\left(a^{\frac{1}{n}}-1\right) \log (a) (2 n y-1)+y (\log{a})^2\right)}{n^2 \left(a^{\frac{1}{n}}-1\right)^2} \Bigg)^{\frac{1}{2}}\Bigg\},
\end{eqnarray*}
upon considering 
\begin{eqnarray*}
\delta &=&\Bigg\{1+\frac{1}{\delta}\Bigg(\frac{x \left(m^2 x \left(a^{\frac{1}{m}}-1\right)^2-\left(a^{\frac{1}{m}}-1\right) \log (a) (2 m x-1)+x (\log{a})^2\right)}{m^2 \left(a^{\frac{1}{m}}-1\right)^2}\\
&&+\frac{y \left(n^2 y \left(a^{\frac{1}{n}}-1\right)^2-\left(a^{\frac{1}{n}}-1\right) \log (a) (2 n y-1)+y (\log{a})^2\right)}{n^2 \left(a^{\frac{1}{n}}-1\right)^2} \Bigg)^{\frac{1}{2}}\Bigg\}=\delta_{m,n},
\end{eqnarray*}
the next one step will give  the required result.\\
Now to prove the second part of this theorem. Upon  using the properties (\ref{p1}), (\ref{p2}) and with the help of Cauchy-Schwarz inequality, we get:
\begin{eqnarray}\label{I1}
\nonumber|\Hat{Y}_{m,n,a}(f;x,y)-f(x,y)|&\leq &  \Hat{Y}_{m,n,a}(|f(t,s)-f(x,y)|;x,y))\\ \nonumber &\leq & \Hat{Y}_{m,n,a}(|f(t,s)-f(t,y)|;x,y))+\Hat{Y}_{m,n,a}(|f(t,y)-f(x,y)|;x,y))\\\nonumber&\leq & \omega_2(f,\delta_m) \left(1+\frac{1}{\delta_m} (\Hat{Y}_{m,n,a}(\sqrt{(s-y)^2};x,y))^{\frac{1}{2}} \right)\\
&&+ \omega_1(f,\delta_n) \left(1+\frac{1}{\delta_n} (\Hat{Y}_{m,n,a}(\sqrt{(t-x)^2};x,y))^{\frac{1}{2}} \right),
\end{eqnarray}
where
\begin{eqnarray}
\delta_m &=&\frac{x \left(m^2 x \left(a^{\frac{1}{m}}-1\right)^2-\left(a^{\frac{1}{m}}-1\right) \log (a) (2 m x-1)+x (\log{a})^2\right)}{m^2 \left(a^{\frac{1}{m}}-1\right)^2}\\
\delta_n &=& \frac{y \left(n^2 y \left(a^{\frac{1}{n}}-1\right)^2-\left(a^{\frac{1}{n}}-1\right) \log (a) (2 n y-1)+y (\log{a})^2\right)}{n^2 \left(a^{\frac{1}{n}}-1\right)^2}
\end{eqnarray}
hence, by using inequality (\ref{I1}), the above result can be obtained.
\end{proof}

\subsection{Some basic definitions for associated GBS (Generalized Boolean Sum) operators} 
In recent years, the study of generalized Boolean sum (GBS) operators of certain linear positive operators is an interesting topic in approximation theory and function theory. 
In order to make analysis in multidimensional spaces, Karl B$\ddot{\text{o}}$gel introduced the concepts of  $B$-continuous and $B$-differentiable function in \cite{BK1,BK2}. 
In \cite{BCGH1}, the authors discussed some significance role of the B$\ddot{\text{o}}$gel space. They proved that the space of all bounded B$\ddot{\text{o}}$gel functions is isometrically isometric with the completion of the blending function space with respect to suitable norm. 
Also the main importance of the B$\ddot{\text{o}}$gel space is that the functions which are not continuous in general but are $B$-continuous can also be approximated by the operators.
In this subsection, some basic definitions are defined for associated GBS-type operators in the B$\ddot{\text{o}}$gel space and their related properties are discussed. 

\begin{definition}
\textbf{B-Continuous:} Consider two compact intervals $\mathfrak{A_1}, \mathfrak{A_2}\subset \mathbb{R}$, a function $f:\mathfrak{A_1}\times\mathfrak{A_2}\to\mathbb{R}$ is said to be $B$-continuous function at a point $(u_0,v_0)\in \mathfrak{A_1}\times\mathfrak{A_2}$, if  
\begin{eqnarray}\label{e1}
\underset{(u,v)\to(u_0,v_0)}\lim \Delta f((u,v),(u_0,v_0))=0,
\end{eqnarray}
where $\Delta f((u,v),(u_0,v_0))=f(u,v)-f(u,v_0)-f(u_0,v)+f(u_0,v_0)$ and  the set of all $B$-continuous function is denoted by $C_b(\mathfrak{A_1}\times \mathfrak{A_2})$. 
\end{definition}
\begin{definition}
\textbf{$B$-Bounded:} A real valued function $f$ defined on $\mathfrak{A_1}\times\mathfrak{A_2}$ is said to be $B$-Bounded, if there exist a positive constant $\mathcal{M}$ such that
\begin{eqnarray}\label{e2}
\Delta f((u,v),(u_0,v_0))\leq \mathcal{M},
\end{eqnarray}
i.e. denoted by $B_b(\mathfrak{A_1}\times\mathfrak{A_2})$.
\end{definition}
\begin{definition}
\textbf{$B$-Differentiable:} A function $f$ is called $B$-Differentiable iff 
\begin{eqnarray}\label{e3}
D_Bf(u_0,v_0)=\underset{(u,v)\to(u_0,v_0)}\lim \frac{\Delta f((u,v),(u_0,v_0))}{(u-u_0)(v-v_0)},
\end{eqnarray}
provided the limit exists and finite where the set of all $B$-differentiable  functions is denoted by $D_b(\mathfrak{A_1}\times\mathfrak{A_2})$. For more details see \cite{BK1,BK2}.
\end{definition}

Motivated by cited papers in introduction part, here, we define the associated GBS-type operators of the above defined biavriate operators (\ref{R1}) to investigate their approximation  properties in the B$\ddot{\text{o}}$gel space. The main motive of this part is to determine the convergence results of the GBS-type operators defined by (\ref{R2}) along with their properties by theoretical, numerical as well as graphical sense. The goodness of the GBS-type operators is that these operators have a better rate of convergence than the proposed bivariate operators (\ref{R1}) as well as the GBS operators of Mirakjan-Favard-Sz\'{a}sz. So, before the discussion of their properties, first we construct here the GBS-type operators of the above bivariate operators (\ref{R1}).\\

Consider two compact intervals $\mathfrak{A_1}, \mathfrak{A_2}\subset \mathbb{R}$ and for any point $(x,y)\in \mathfrak{A_1}\times\mathfrak{A_2}$, the Boolean sum of the function $f:\mathfrak{A_1}\times\mathfrak{A_2}\to\mathbb{R}$ can be defined as $\Delta f((x,y),(t,s))=f(x,y)-f(x,t)-f(s,y)+f(t,s)$ at a point $(t,s)\in \mathfrak{A_1}\times\mathfrak{A_2}$. Then the associated GBS (Generalized Boolean Sum)-type operators of $\Hat{Y}_{m,n,a}(f;x,y)$ can be expressed as
\begin{eqnarray}\label{R2}
\nonumber \Hat{{BY}}_{m,n}^{a}(f;x,y)&=& \Hat{Y}_{m,n,a}(f(x,s)+f(t,y)-f(t,s))\\
&=& \sum\limits_{k_1=0}^{\infty}\sum\limits_{k_2=0}^{\infty}s_{m,n}^a(x,y) \Bigg(f\left(x,\frac{k_2}{n} \right)+f\left(\frac{k_1}{m},y \right)-f\left(\frac{k_1}{m},\frac{k_2}{n} \right) \Bigg),
\end{eqnarray}
where $f\in C_b(X_b=[0,c]\times[0,d])$.

\subsection{Degree of the approximation of the GBS-type operators}
In this subsection, we discuss the rate of convergence of the GBS-type operators with the help of modulus of smoothness in a B$\ddot{\text{o}}$gel space, and get a relation using the mixed modulus of smoothness. Now to define the modulus of smoothness, we assume that the function $f\in C_b(X_b=[0,c]\times[0,d])$. The property of mixed modulus of smoothness is same as the modulus of continuity, which can be defined as
\begin{eqnarray}
\omega_B(f;\delta_1,\delta_2)=\sup\{|\Delta f(t,s;x,y)|:|t-x|<\delta_1, |s-y|<\delta_2,~~ (x,y),(t,s)\in X_b=[0,c]\times[0,d]\},
\end{eqnarray}
for any $(\delta_1,\delta_2)\in X=[0,\infty)\times[0,\infty)$ and having property
\begin{eqnarray}
\omega_B (f;\delta_m,\delta_n)\to 0,~ \text{as}~m,n\to\infty.
\end{eqnarray}
\begin{remark}\label{re1}
The property of the modulus of smoothness can be defined as:
\begin{eqnarray}
\omega_B(f;\mu_1\delta_1,\mu_2\delta_2)= (1+\mu_1)(1+\mu_2)\omega_B(f;\delta_1,\delta_2),~~~\mu_1,\mu_2>0.
\end{eqnarray}
\end{remark}
\begin{theorem}
Let $f\in C_b(X_b)$ and $\Hat{{BY}}_{m,n}^{a}(f;x,y)$ be linear  positive operators defined by (\ref{R2}), then the following inequality holds:
\begin{eqnarray}
|\Hat{{BY}}_{m,n}^{a}f)(x,y)-f(x,y)|\leq 4 \omega_B(f;\delta_m',\delta_n').
\end{eqnarray}
\end{theorem}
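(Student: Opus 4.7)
The plan is to rewrite the GBS difference in a form that exposes the mixed difference $\Delta f$, then control that difference by the mixed modulus of smoothness $\omega_B$. Since $\hat{Y}_{m,n,a}$ reproduces constants (in the variables $t,s$), I can write
\begin{eqnarray*}
\hat{BY}_{m,n}^{a}(f;x,y)-f(x,y) &=& \hat{Y}_{m,n,a}\bigl(f(x,s)+f(t,y)-f(t,s)-f(x,y);\,x,y\bigr)\\
&=& -\,\hat{Y}_{m,n,a}\bigl(\Delta f((t,s),(x,y));\,x,y\bigr),
\end{eqnarray*}
where $\Delta f((t,s),(x,y))=f(t,s)-f(t,y)-f(x,s)+f(x,y)$. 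This identity is the structural starting point; everything that follows is an estimate of the right-hand side.

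Next I would invoke the quasi-additivity of $\omega_B$ recorded in Remark \ref{re1} with $\mu_1=|t-x|/\delta_m'$ and $\mu_2=|s-y|/\delta_n'$, which yields the pointwise bound
\[
|\Delta f((t,s),(x,y))|\le \Bigl(1+\tfrac{|t-x|}{\delta_m'}\Bigr)\Bigl(1+\tfrac{|s-y|}{\delta_n'}\Bigr)\omega_B(f;\delta_m',\delta_n').
\]
Applying the (positive, linear) operator $\hat{Y}_{m,n,a}$ and expanding the product, I reduce the problem to estimating the four pieces $\hat{Y}_{m,n,a}(1;x,y)$, $\hat{Y}_{m,n,a}(|t-x|;x,y)$, $\hat{Y}_{m,n,a}(|s-y|;x,y)$, and $\hat{Y}_{m,n,a}(|t-x|\,|s-y|;x,y)$.

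The first moment is handled by the Cauchy–Schwarz inequality together with Lemma \ref{L3}: for instance,
\[
\hat{Y}_{m,n,a}(|t-x|;x,y)\le \bigl(\hat{Y}_{m,n,a}((t-x)^2;x,y)\bigr)^{1/2}\le \delta_m',
\]
and symmetrically for the $s$-variable. The mixed term is controlled by a two-variable Cauchy–Schwarz,
\[
\hat{Y}_{m,n,a}(|t-x|\,|s-y|;x,y)\le \bigl(\hat{Y}_{m,n,a}((t-x)^2;x,y)\bigr)^{1/2}\bigl(\hat{Y}_{m,n,a}((s-y)^2;x,y)\bigr)^{1/2}\le \delta_m'\,\delta_n'.
\]
Dividing each of these by $\delta_m'$, $\delta_n'$, or $\delta_m'\delta_n'$ as appropriate gives four contributions, each bounded by $1$, so their sum is at most $4$, and multiplying by $\omega_B(f;\delta_m',\delta_n')$ yields the claimed inequality.

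I do not expect any real obstacle here: the only delicate point is recognising that the constant $4$ arises from the product structure $(1+\mu_1)(1+\mu_2)$ (four terms), so that one needs to absorb the cross term $|t-x||s-y|$ via a joint Cauchy–Schwarz rather than a crude AM–GM; if one used AM–GM the factor would degrade. Once that observation is in place, the remaining work is a direct application of Lemma \ref{L3}.
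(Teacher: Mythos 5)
Your proposal is correct and follows essentially the same route as the paper: rewrite the GBS error as $-\Hat{Y}_{m,n,a}(\Delta f;x,y)$, bound $|\Delta f|$ by $(1+|t-x|/\delta_m')(1+|s-y|/\delta_n')\omega_B(f;\delta_m',\delta_n')$, and control the resulting first and mixed absolute moments via Cauchy--Schwarz together with Lemma \ref{L3}, yielding the four unit contributions that produce the constant $4$. Your explicit handling of the cross term $|t-x|\,|s-y|$ by a joint Cauchy--Schwarz is slightly more careful than the paper's presentation (which tacitly uses the product structure of the operator), but the argument is the same.
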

\begin{proof}
Upon using the Remark (\ref{re1}), it can be written as:
\begin{eqnarray*}
\nonumber|\Delta f(t,s;x,y)| &\leq & \omega_B(f;\delta_1,\delta_2)\\
&\leq& \Bigg(1-\frac{|t-x|}{\delta_1}\Bigg)\Bigg(1-\frac{|s-y|}{\delta_2}\Bigg)\omega_B(f;\delta_1,\delta_2),~~\delta_1,\delta_2\geq0.
\end{eqnarray*}
Using the property of the difference function $\Delta f(t,s;x,y)$ and applying the operators (\ref{R1}), it gives 
\begin{eqnarray}
\Hat{{BY}}_{m,n}^{a}(f;x,y)&=& f(x,y)\Hat{Y}_{m,n,{a}}(1,x,y)-\Hat{Y}_{m,n,{a}}(\Delta f(t,s;x,y),x,y),
\end{eqnarray}
Upon using Cauchy-Schwartz inequality in equation (2.17),  it obtains
\begin{eqnarray*}
|\Hat{{BY}}_{m,n}^{a}(f;x,y)-f(x,y)|&\leq & \Hat{Y}_{m,n,{a}}(|\Delta f(t,s;x,y)|;x,y)\\
&\leq & \Bigg(\Hat{Y}_{m,n,{a}}(e_{00};x,y)+\frac{1}{\delta_1}\Hat{Y}_{m,n,{a}}(|t-x|;x,y) \Bigg)\\
&&\times\Bigg(\Hat{Y}_{m,n,{a}}(e_{00};x,y)+\frac{1}{\delta_2}\Hat{Y}_{m,n,{a}}(|s-y|;x,y) \Bigg)\omega_B(f;\delta_1,\delta_2)\\
&\leq & \Bigg(1+\frac{1}{\delta_1}\sqrt{\Hat{Y}_{m,n,{a}}((t-x)^2;x,y)}+ \frac{1}{\delta_2}\sqrt{\Hat{Y}_{m,n,{a}}((s-y)^2;x,y)}\\
&&+\frac{1}{\delta_1\delta_2}\sqrt{\Hat{Y}_{m,n,{a}}((t-x)^2;x,y)}\sqrt{\Hat{Y}_{m,n,{a}}((s-y)^2;x,y)} \Bigg)\omega_B(f;\delta_1,\delta_2).
\end{eqnarray*}
Now by using Lemma \ref{L3}, and choosing
$\delta_1=\delta_m^{'}, \delta_2=\delta_n^{'} $, the desired results can be obtained.       
\end{proof}

Next we will find the degree of approximation of the GBS-type operators defined by (\ref{R2}), by means of $B$-continuous function belonging to the Lipschitz class and it can be defined as:
\begin{eqnarray}
\text{Lip}_M(\mu_1,\mu_2)=\{f\in C_b(X_b):|\Delta f((u,v),(u_0,v_0))|\leq M |u-u_0|^{\mu_1}|v-v_0|^{\mu_2},~ \mu_1, \mu_2\in (0,1]\},
\end{eqnarray}
where $(u,v),(u_0,v_0)\in X_b$ and $M>0$.
\begin{theorem}
Let $f\in \text{Lip}_M(\mu_1,\mu_2)$, then there exist a positive constant $M$, such that  
\begin{eqnarray}
|\Hat{{BY}}_{m,n}^{a}(f;x,y)-f(x,y)|\leq M \delta_m'^{\frac{\mu_1}{2}}\delta_n'^{\frac{\mu_2}{2}},
\end{eqnarray}
where $\delta_m'=\sqrt{\frac{x(x+1)}{m}},~\delta_n'=\sqrt{\frac{y(y+1)}{n}}$.
\end{theorem}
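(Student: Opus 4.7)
The plan is to reduce the GBS estimate to a statement about the bivariate operator $\Hat{Y}_{m,n,a}$ applied to $|\Delta f(t,s;x,y)|$, and then to exploit the product (separable) structure of the kernel $s_{m,n}^{a}(x,y)$ together with H\"older's inequality and Lemma \ref{L3}. First, expanding the definition \eqref{R2} and using $\Hat{Y}_{m,n,a}(e_{00};x,y)=1$, I obtain the by-now-standard identity
\begin{eqnarray*}
\Hat{{BY}}_{m,n}^{a}(f;x,y)-f(x,y) &=& -\Hat{Y}_{m,n,a}\bigl(\Delta f(t,s;x,y);x,y\bigr),
\end{eqnarray*}
exactly as in the proof of the previous theorem. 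Passing to absolute values gives the key majorization
$|\Hat{{BY}}_{m,n}^{a}(f;x,y)-f(x,y)|\leq \Hat{Y}_{m,n,a}(|\Delta f(t,s;x,y)|;x,y)$.

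Next, because $f\in \text{Lip}_M(\mu_1,\mu_2)$ gives $|\Delta f(t,s;x,y)|\leq M|t-x|^{\mu_1}|s-y|^{\mu_2}$, the above majorization becomes
\begin{eqnarray*}
|\Hat{{BY}}_{m,n}^{a}(f;x,y)-f(x,y)|\leq M\,\Hat{Y}_{m,n,a}\bigl(|t-x|^{\mu_1}|s-y|^{\mu_2};x,y\bigr).
\end{eqnarray*}
Here I will use the product structure of the basis functions, $s_{m,n}^{a}(x,y)=s_m^{a}(x)\,s_n^{a}(y)$, which is already visible in \eqref{R1}. This factorization lets the double sum split as
$\Hat{Y}_{m,n,a}(|t-x|^{\mu_1}|s-y|^{\mu_2};x,y)=\Hat{Y}_{m,n,a}(|t-x|^{\mu_1};x,y)\cdot\Hat{Y}_{m,n,a}(|s-y|^{\mu_2};x,y)$.

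For each univariate factor I would apply H\"older's inequality with exponents $p=2/\mu_i$ and $q=2/(2-\mu_i)$, which, together with $\Hat{Y}_{m,n,a}(e_{00};x,y)=1$, yields
\begin{eqnarray*}
\Hat{Y}_{m,n,a}(|t-x|^{\mu_1};x,y)&\leq&\bigl(\Hat{Y}_{m,n,a}((t-x)^2;x,y)\bigr)^{\mu_1/2},\\
\Hat{Y}_{m,n,a}(|s-y|^{\mu_2};x,y)&\leq&\bigl(\Hat{Y}_{m,n,a}((s-y)^2;x,y)\bigr)^{\mu_2/2}.
\end{eqnarray*}
Invoking Lemma \ref{L3} to bound the two second moments by $\delta_m'^{\,2}=x(x+1)/m$ and $\delta_n'^{\,2}=y(y+1)/n$ respectively, and multiplying, delivers the required estimate.

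I expect the only subtlety to be the H\"older step. Pointwise separation of the bivariate operator is immediate from the product form of $s_{m,n}^{a}$ and so is essentially bookkeeping; the Lipschitz hypothesis collapses $\Delta f$ into the product of two absolute-moment pieces in one line. The interesting choice is the exponent pair in H\"older, which is precisely what converts fractional absolute moments to (halves of) second moments already controlled by Lemma \ref{L3}. Everything else is substitution.
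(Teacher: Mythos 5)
Your proposal is correct and follows essentially the same route as the paper: majorize the error by $\Hat{Y}_{m,n,a}(|\Delta f((t,s),(x,y))|;x,y)$, apply the Lipschitz bound, factor the mixed absolute moment through the product structure of the kernel, and finish with H\"older's inequality (exponents $2/\mu_i$ and $2/(2-\mu_i)$) together with Lemma \ref{L3}. The only remark worth making is that both your argument and the paper's own final line actually produce the bound $M\,\delta_m'^{\mu_1}\delta_n'^{\mu_2}$ (since Lemma \ref{L3} bounds the second moments by $\delta_m'^{\,2}$ and $\delta_n'^{\,2}$), so the halved exponents $\delta_m'^{\mu_1/2}\delta_n'^{\mu_2/2}$ in the theorem statement appear to be a typographical slip rather than a defect of your reasoning.
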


\begin{proof}
By using the linearity property of GBS-type operators  (\ref{R2}) and by definition of $\Hat{{BY}}_{m,n}^{a}(f;x,y)$, we can write
\begin{eqnarray*}
|\Hat{{BY}}_{m,n}^{a}(f;x,y)-f(x,y)|&\leq & \Hat{Y}_{m,n,a}(|\Delta f((t,s),(x,y))|;x,y)\\
&\leq & M \Hat{Y}_{m,n,a}(|u-u_0|^{\mu_1}|v-v_0|^{\mu_2};x,y)\\
&=& M\Hat{Y}_{m,n,a}(|t-x|^{\mu_1};x,y)\Hat{Y}_{m,n,a}(|s-y|^{\mu_2};x,y),
\end{eqnarray*}
Using H$\ddot{\text{o}}$lder's inequality and by considering $l_1=\frac{2}{\mu_1},~r_1=\frac{2}{2-\mu_1}$ and $l_2=\frac{2}{\mu_2},~r_2=\frac{2}{2-\mu_2}$ , in the next step, the required result can be obtained as,
\begin{eqnarray*}
|\Hat{{BY}}_{m,n}^{a}(f;x,y)-f(x,y)|&\leq & M(\Hat{Y}_{m,n,a}((t-x)^{2};x,y))^{\frac{\mu_1}{2}}(\Hat{Y}_{m,n,a}((s-y)^{2};x,y))^{\frac{\mu_2}{2}}\\
&\leq &M \delta_m'^{\frac{\mu_1}{2}}\delta_n'^{\frac{\mu_2}{2}}.
\end{eqnarray*}
Hence proved.
\end{proof}
Next the rate of convergence of the above operators can be found, when the function is $B$-differentiable, and it is defined by  (\ref{e3}). 

\begin{lemma}
For any $x,y\geq 0$ and for all $m,n\in \mathbb{N}$, we have
\begin{eqnarray*}
\Hat{Y}_{m,n,a}((\cdot-x)^{2i}(\star-y)^{2j};x,y)=\Hat{Y}_{m,n,a}((\cdot-x)^{2i});x,y)\Hat{Y}_{m,n,a}(\star-y)^{2j};x,y),~~\forall~i,j\in\mathbb{N}\cup\{0\}.
\end{eqnarray*}
\end{lemma}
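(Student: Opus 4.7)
The plan is to exploit the tensor-product structure of both the kernel $s_{m,n}^{a}(x,y)$ and the test function. Looking at the definition, the kernel cleanly factors as $s_{m,n}^{a}(x,y) = p_{m}^{a}(x,k_1)\,q_{n}^{a}(y,k_2)$, where
\[
p_{m}^{a}(x,k_1)=a^{-x/(a^{1/m}-1)}\frac{x^{k_1}(\log a)^{k_1}}{(a^{1/m}-1)^{k_1}k_1!},\qquad q_{n}^{a}(y,k_2)=a^{-y/(a^{1/n}-1)}\frac{y^{k_2}(\log a)^{k_2}}{(a^{1/n}-1)^{k_2}k_2!}.
\]
In parallel, the test function $(t-x)^{2i}(s-y)^{2j}$, evaluated at the nodes $(k_1/m,k_2/n)$, also factors as a product of a function of $k_1$ alone and a function of $k_2$ alone.

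Next I would write the double sum defining $\Hat{Y}_{m,n,a}((\cdot-x)^{2i}(\star-y)^{2j};x,y)$ and, using the two factorizations above, separate the $k_1$-sum from the $k_2$-sum by Fubini/Tonelli for nonneg series:
\[
\sum_{k_1=0}^{\infty}\sum_{k_2=0}^{\infty} p_{m}^{a}(x,k_1)q_{n}^{a}(y,k_2)\left(\tfrac{k_1}{m}-x\right)^{2i}\left(\tfrac{k_2}{n}-y\right)^{2j} = A\cdot B,
\]
where $A=\sum_{k_1}p_{m}^{a}(x,k_1)(k_1/m-x)^{2i}$ and $B=\sum_{k_2}q_{n}^{a}(y,k_2)(k_2/n-y)^{2j}$. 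Finally, since $\sum_{k_2}q_{n}^{a}(y,k_2)=1$ and $\sum_{k_1}p_{m}^{a}(x,k_1)=1$ (as verified implicitly in Lemma \ref{L1}), one identifies $A$ with $\Hat{Y}_{m,n,a}((\cdot-x)^{2i};x,y)$ and $B$ with $\Hat{Y}_{m,n,a}((\star-y)^{2j};x,y)$, which gives the claimed product formula.

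The only subtle point is justifying the interchange of the two infinite sums. Since every term in the double series is nonnegative (the exponents $2i,2j$ are even and all kernel factors are nonnegative), Tonelli's theorem for series applies unconditionally; absolute convergence of the resulting single sums $A$ and $B$ is already guaranteed by the moment computations performed in Lemmas \ref{L1}, \ref{L2}, and \ref{L5}. So the main obstacle is really just bookkeeping — no new estimate is required, and the argument reduces to recognizing separability of the kernel together with an application of Tonelli.
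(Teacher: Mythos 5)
Your proposal is correct and follows essentially the same route as the paper: both exploit the factorization of the kernel $s_{m,n}^a(x,y)$ into a $k_1$-part and a $k_2$-part to split the double sum into a product of single sums. Your added care about Tonelli and the normalization of each marginal kernel only makes explicit what the paper's proof leaves implicit.
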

\begin{proof}
Given that $x,y\geq 0$ and $n,m\in\mathbb{N}$ then, we have 

\begin{eqnarray*}
\Hat{Y}_{m,n,a}((\cdot-x)^{2i}(\star-y)^{2j};x,y)&=&\sum\limits_{k_1=0}^{\infty}\sum\limits_{k_2=0}^{\infty}s_{m,n}^a(x,y)\left(\frac{k_1}{m}-x \right)^{2i}\left(\frac{k_2}{n}-y \right)^{2j}\\
&=&\sum \limits_{k_1=0}^{\infty} s_{m}^a(x,y)\left(\frac{k_1}{m}-x \right)^{2i}\sum\limits_{k_2=0}^{\infty}s_{m}^a(x,y) \left(\frac{k_2}{n}-y \right)^{2j}\\&=&\Hat{Y}_{m,n,a}((\cdot-x)^{2i});x,y)\Hat{Y}_{m,n,a}(\star-y)^{2j};x,y).
\end{eqnarray*}
Hence proved.
\end{proof}

\begin{theorem}
Let $f\in D_b(X_b)$ and $D_B f\in B_b(X_b)$, then there exist a positive constant $M_1$, such that 
\begin{eqnarray}
|\Hat{{BY}}_{m,n}^{a}(f;x,y)-f(x,y)|\leq \frac{M_4}{\sqrt{mn}}	\Bigg\{ 3M_3\|D_B f\|+\omega_B\left( D_B f;\frac{1}{\sqrt{m}},\frac{1}{\sqrt{n}}\right) \Bigg\} 
\end{eqnarray}
\end{theorem}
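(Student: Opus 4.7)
The plan is to follow the standard GBS/B-differentiable roadmap. Since $f \in D_b(X_b)$ with bounded $D_B f$, I would start from a mean-value-type identity: for any $(t,s),(x,y)\in X_b$ there exists $(\xi,\eta)$ (with $\xi$ between $x$ and $t$, $\eta$ between $y$ and $s$) such that
\[
\Delta f((t,s),(x,y))=(t-x)(s-y)\,D_B f(\xi,\eta).
\]
Combining this with the identity
\[
\Hat{{BY}}_{m,n}^{a}(f;x,y)-f(x,y)=-\Hat{Y}_{m,n,a}(\Delta f((t,s),(x,y));x,y)
\]
(already used in the previous theorem, since $\Hat{Y}_{m,n,a}(e_{00};x,y)=1$) reduces the problem to estimating $\Hat{Y}_{m,n,a}(|t-x||s-y||D_B f(\xi,\eta)|;x,y)$.

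Next I would break $|D_B f(\xi,\eta)|$ into a bounded piece and a smoothness piece. Writing
\[
D_B f(\xi,\eta)=\bigl[D_B f(\xi,\eta)-D_B f(\xi,y)-D_B f(x,\eta)+D_B f(x,y)\bigr]+D_B f(\xi,y)+D_B f(x,\eta)-D_B f(x,y)
\]
and using $\|D_B f\|<\infty$ together with the definition of $\omega_B$ yields the pointwise bound
\[
|D_B f(\xi,\eta)|\le 3\|D_B f\|+\omega_B(D_B f;|\xi-x|,|\eta-y|)\le 3\|D_B f\|+\omega_B(D_B f;|t-x|,|s-y|),
\]
using $|\xi-x|\le|t-x|$ and $|\eta-y|\le|s-y|$. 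This splits the estimate into two summands $I_1+I_2$.

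For $I_1=3\|D_B f\|\,\Hat{Y}_{m,n,a}(|t-x||s-y|;x,y)$, I would apply Cauchy–Schwarz together with the tensor structure (the just-proved lemma giving $\Hat{Y}_{m,n,a}((t-x)^{2i}(s-y)^{2j};x,y)=\Hat{Y}_{m,n,a}((t-x)^{2i};x,y)\,\Hat{Y}_{m,n,a}((s-y)^{2j};x,y)$), obtaining $I_1\le 3\|D_B f\|\,\delta'_m\delta'_n$ via Lemma \ref{L3}. On $[0,c]\times[0,d]$ this is $O(1/\sqrt{mn})$, which is the source of the $M_3/\sqrt{mn}$ contribution. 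For $I_2$, I would choose $\delta_1=1/\sqrt{m}$, $\delta_2=1/\sqrt{n}$ and use the scaling property of $\omega_B$ recalled in Remark~\ref{re1}:
\[
\omega_B(D_B f;|t-x|,|s-y|)\le\Bigl(1+\sqrt{m}\,|t-x|\Bigr)\Bigl(1+\sqrt{n}\,|s-y|\Bigr)\omega_B\!\Bigl(D_B f;\tfrac{1}{\sqrt{m}},\tfrac{1}{\sqrt{n}}\Bigr).
\]
Expanding the product gives four terms of the form $\Hat{Y}_{m,n,a}(|t-x|^i|s-y|^j;x,y)$ with $(i,j)\in\{(1,1),(2,1),(1,2),(2,2)\}$; each factors via the tensor structure, and Cauchy–Schwarz on the absolute-value-first-moments reduces everything to products of $\delta'_m$ and $\delta'_n$, which on $[0,c]\times[0,d]$ are bounded by constants times $1/\sqrt{m}$ and $1/\sqrt{n}$. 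Summing these four contributions gives an overall $O(1/\sqrt{mn})$ factor in front of $\omega_B\bigl(D_B f;\tfrac{1}{\sqrt{m}},\tfrac{1}{\sqrt{n}}\bigr)$.

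Collecting $I_1$ and $I_2$ and absorbing the $c,d$-dependent constants into $M_3$ and $M_4$ then delivers the desired inequality. The main obstacle is purely bookkeeping in the $I_2$ step: I must keep the tensor factorization and Cauchy–Schwarz estimates tight enough that every one of the four expanded terms ends up with the same overall order $1/\sqrt{mn}$, rather than accidentally producing a $1/m$ or $1/n$ term alone, which would spoil the symmetric form of the stated bound.
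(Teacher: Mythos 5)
Your proposal is correct and follows essentially the same route as the paper: the mean-value identity for $\Delta f$, the decomposition of $D_Bf(\xi,\eta)$ into a bounded part plus a mixed-modulus part, the scaling property of $\omega_B$ with $\delta_1=1/\sqrt{m}$, $\delta_2=1/\sqrt{n}$, and moment estimates via Cauchy--Schwarz together with the tensor factorization. The only cosmetic difference is that you tensorize first and apply Cauchy--Schwarz to each univariate factor (so only second moments are needed), whereas the paper applies Cauchy--Schwarz to the bivariate moments and therefore also invokes the fourth-moment bounds of Lemma \ref{L5}; both yield the same $1/\sqrt{mn}$ order.
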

\begin{proof}
Using mean value theorem for $B$-differntaible functions, it can be written as 
\begin{eqnarray}\label{e5}
 D_Bf(\beta,\gamma)=\frac{\Delta f((t,s),(x,y))}{(t-x)(s-y)},~~\text{where} ~\beta\in (t,x);~\gamma\in (s,y).
\end{eqnarray}
By using the property of $\Delta f((t,s),(x,y))$, it gives:
\begin{eqnarray}\label{e4}
D_B f(\beta,\gamma)=\Delta D_B f((\beta,\gamma),(x,y))+D_B f(x,\gamma)+D_B f(\beta,y)-D_B f(x,y),
\end{eqnarray}
since, $D_B f\in B_b(X_b)$, so by using equation (\ref{e4}) and the equality (\ref{e5}), it obtains
\begin{eqnarray*}
|\Hat{Y}_{m,n,a}(\Delta f((t,s),(x,y));x,y)|&=&|\Hat{Y}_{m,n,a}((t-x)(s-y)D_B f(\beta,\gamma));x,y)|\\
&\leq & \Hat{Y}_{m,n,a}(|(t-x)||(s-y)|| \Delta D_B f(\beta,\gamma),(x,y))|;x,y)\\
&&+\Hat{Y}_{m,n,a}(|t-x||s-y|(|D_B f(x,\gamma)|+|D_B f(\beta,y)|-|D_B f(x,y)|);x,y)\\
&\leq & \Hat{Y}_{m,n,a}(|t-x||y-s|\omega_B(D_B f;|\beta-x|,|\gamma-y|);x,y)\\
&&+3\|D_B f\|\Hat{Y}_{m,n,a}(|t-x||y-s|;x,y),
\end{eqnarray*}
as $\beta\in (x,t)$ and $\gamma\in (y,s)$ (already assumed) and with the property of modulus, for $h_m,h_n>0$, we have
\begin{eqnarray*}
\omega_B(D_B f;|\beta-x|,|\gamma-y|)&\leq & \omega_B(D_B f;|t-x|,|s-y|)\\
&\leq & \left(1+\frac{|t-x|}{h_m}\right)\left(1+\frac{|s-y|}{h_n}\right)\omega_B(D_B f;h_m,h_n),
\end{eqnarray*}
therefore,
\begin{eqnarray}\label{ine1}
\nonumber|\Hat{Y}_{m,n,a}(\Delta f((t,s),(x,y));x,y)| &\leq & \Hat{Y}_{m,n,a}\Bigg(|t-x||y-s|\Bigg(\left(1+\frac{|t-x|}{h_m}\right)\left(1+\frac{|s-y|}{h_n}\right)\omega_B(D_B f;h_m,h_n)\Bigg);x,y\Bigg)  \\
 &&+3\|D_B f\|\Hat{Y}_{m,n,a}(|t-x||y-s|;x,y),
\end{eqnarray}
since, 
\begin{eqnarray}\label{ine2}
|\Hat{{BY}}_{m,n}^{a}(f;x,y)-f(x,y)|\leq \Hat{Y}_{m,n,a}(|\Delta f((t,s),(x,y))|;x,y),
\end{eqnarray}
Upon using the inequalities (\ref{ine1}), (\ref{ine2}) and with the help of Cauchy-Schwarz inequality, we get 
\begin{eqnarray*}
|\Hat{{BY}}_{m,n}^{a}(f;x,y)-f(x,y)|&\leq & \Bigg\{\left(\Hat{Y}_{m,n,a}((t-x)^2(s-y)^2;x,y)\right)^{\frac{1}{2}}+h_m^{-1} \left(\Hat{Y}_{m,n,a}((t-x)^4(s-y)^2;x,y)\right)^{\frac{1}{2}}\\&&+h_n^{-1}\left(\Hat{Y}_{m,n,a}((t-x)^2(s-y)^4;x,y)\right)^{\frac{1}{2}}\\&&
+h_m^{-1}h_n^{-1}\left(\Hat{Y}_{m,n,a}((t-x)^4(s-y)^4;x,y)\right)^{\frac{1}{2}}\Bigg\}\omega_B(D_B f;h_m,h_n)\\&&
+3\|D_B f\|\left(\Hat{Y}_{m,n,a}((t-x)^2(s-y)^2;x,y)\right)^{\frac{1}{2}}\\
&=&\Bigg\{\sqrt{\Hat{Y}_{m,n,a}((t-x)^2;x,y)}\sqrt{\Hat{Y}_{m,n,a}((s-y)^2;x,y)}\\&&
+h_m^{-1}\sqrt{\Hat{Y}_{m,n,a}((t-x)^4;x,y)} \sqrt{\Hat{Y}_{m,n,a}((s-y)^2;x,y)}\\&& 
+h_n^{-1}\sqrt{\Hat{Y}_{m,n,a}((t-x)^2;x,y)} \sqrt{\Hat{Y}_{m,n,a}((s-y)^4;x,y)}  \\&&
+h_m^{-1}h_n^{-1}\sqrt{\Hat{Y}_{m,n,a}((t-x)^4;x,y)} \sqrt{\Hat{Y}_{m,n,a}((s-y)^4;x,y)}\Bigg\}\omega_B(D_B f;h_m,h_n)\\&&
+3\|D_B f\| \sqrt{\Hat{Y}_{m,n,a}((t-x)^2;x,y)}\sqrt{\Hat{Y}_{m,n,a}((s-y)^2;x,y)},
\end{eqnarray*}
Now using the inequalities (\ref{re2}), (\ref{re3}) and Lemma \ref{L5}, we have
\begin{eqnarray*}
|\Hat{{BY}}_{m,n}^{a}(f;x,y)-f(x,y)|&\leq &\Bigg\{ \sqrt{\frac{\lambda_x}{m}}\sqrt{\frac{\lambda_y}{n}}+h_m^{-1}\sqrt{\frac{M_x}{m^2}}\sqrt{\frac{\lambda_y}{n}}+h_n^{-1}\sqrt{\frac{\lambda_x}{m}} \sqrt{\frac{M_x}{m^2}}\\&&
+h_m^{-1}h_n^{-1}\sqrt{\frac{M_x}{m^2}}\sqrt{\frac{M_y}{n^2}}\Bigg\}\omega_B(D_B f;h_m,h_n)\\
&&+3\|D_B f\|  \sqrt{\frac{\lambda_x}{m}}\sqrt{\frac{\lambda_y}{n}},
\end{eqnarray*}
Upon considering $h_m^{-1}=\frac{1}{\sqrt{m}}$ and $h_n^{-1}=\frac{1}{\sqrt{n}}$, one can write
\begin{eqnarray*}
|\Hat{{BY}}_{m,n}^{a}(f;x,y)-f(x,y)|&\leq &\frac{1}{\sqrt{mn}}\Bigg\{\Bigg(\sqrt{\lambda_x \lambda_y}+\sqrt{M_x\lambda_y}+\sqrt{\lambda_x M_y}+\sqrt{M_x M_y}\Bigg) \omega_B\left( D_B f;\frac{1}{\sqrt{m}},\frac{1}{\sqrt{n}}\right)\\
&& +3\|D_B f\|\sqrt{\lambda_x \lambda_y} \Bigg\}\\
&= & \frac{1}{\sqrt{mn}} \Bigg\{(\sqrt{\lambda_x}+\sqrt{M_x})(\sqrt{\lambda_y}+\sqrt{M_y})\omega_B\left( D_B f;\frac{1}{\sqrt{m}},\frac{1}{\sqrt{n}}\right)+3\|D_B f\|\sqrt{\lambda_x \lambda_y}  \Bigg\}\\
&=& \frac{1}{\sqrt{mn}}\Bigg\{ M_1M_2\omega_B\left( D_B f;\frac{1}{\sqrt{m}},\frac{1}{\sqrt{n}}\right)+3M_3\|D_B f\|  \Bigg\},
\end{eqnarray*}
where $M_1=(\sqrt{\lambda_x}+\sqrt{M_x})$, $M_2=(\sqrt{\lambda_y}+\sqrt{M_y})$ and $M_3=\sqrt{\lambda_x \lambda_y}$ and $M_4=\max\{M_1M_2,M_3\}$, Hence the Inequality gives
\begin{eqnarray}
|\Hat{{BY}}_{m,n}^{a}(f;x,y)-f(x,y)|\leq \frac{M_4}{\sqrt{mn}}	\Bigg\{ 3M_3\|D_B f\|+\omega_B\left( D_B f;\frac{1}{\sqrt{m}},\frac{1}{\sqrt{n}}\right) \Bigg\}.
\end{eqnarray}
Hence, the proof is completed.
\end{proof}

To improve the measure of smoothness, a mixed $K$-functional is introduced (see \cite{BC}, \cite{CC}) and it is defined by 
\begin{eqnarray}
K_B(f;x_1,x_2)=\{\|f-g_1-g_2-h \|+x_1\| D_B^{2,0}g_1\|+x_2\| D_B^{0,2}g_2\|+x_1x_2\| D_B^{2,2}h\| \},
\end{eqnarray}
where $g_1\in D_B^{2,0}$, $g_2\in D_B^{0,2}$, $h\in D_B^{2,2}$ and $D_B^{i,j}$ represent the space of all functions $f\in C_B(X_b)$ for $0\leq i,j\leq 2$ having mixed partial derivatives $D_B^{\eta,\mu}f$ with $0\leq \eta\leq i $, $0\leq \mu\leq j $  defined by 
\begin{eqnarray}
D_xf(u,v)= D_B^{1,0}(f;u,v)=\underset{x\to u}\lim\frac{\Delta_xf([u,x];v)}{x-u},
\end{eqnarray}
\begin{eqnarray}
D_yf(u,v)= D_B^{0,1}(f;u,v)=\underset{y\to v}\lim\frac{\Delta_yf(u;[v,y])}{y-v},
\end{eqnarray}
\begin{eqnarray}
D_yD_xf(u,v)= D_B^{0,1}D_B^{1,0}(f;u,v)=\underset{y\to v}\lim\frac{\Delta_y(\Delta_x)f(u;[v,y])}{y-v},
\end{eqnarray}
\begin{eqnarray}
D_xD_yf(u,v)= D_B^{1,0}D_B^{0,1}(f;u,v)=\underset{x\to u}\lim\frac{\Delta_x(\Delta_y)f([u,x];v)}{x-u}.  
\end{eqnarray}
 where $\Delta_xf([u,x];v)=f(x,v)-f(u,v)$, $\Delta_yf(u;[v,y])=f(u,y)-f(u,v)$.
 
 \begin{theorem}
 Let $\Hat{{BY}}_{m,n}^{a}(f;x,y)$ be a GBS-type operator of $\Hat{Y}_{m,n,a}(f;x,y)$ for all $x,y\in X_b=[0,c]\times[0,d]$  and for each function $f\in C_B(X_b)$ with $m,n\in \mathbb{N}$, we have

 \begin{eqnarray}
 |\Hat{{BY}}_{m,n}^{a}(f;x,y)-f(x,y)|\leq 2K_B\left( f,\frac{\lambda_x}{m},\frac{\lambda_y}{n}\right).
\end{eqnarray}  
 \end{theorem}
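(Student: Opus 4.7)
The plan is to apply the standard Peetre $K$-functional argument adapted to the B\"ogel setting. Fix any admissible decomposition $f = r + g_1 + g_2 + h$ with $r := f - g_1 - g_2 - h$, $g_1 \in D_B^{2,0}$, $g_2 \in D_B^{0,2}$, and $h \in D_B^{2,2}$. By linearity of $\Hat{{BY}}_{m,n}^{a}$,
\[
|\Hat{{BY}}_{m,n}^{a}(f;x,y) - f(x,y)| \le |\Hat{{BY}}_{m,n}^{a}(r;x,y) - r(x,y)| + \sum_{i=1}^{2} |\Hat{{BY}}_{m,n}^{a}(g_i;x,y) - g_i(x,y)| + |\Hat{{BY}}_{m,n}^{a}(h;x,y) - h(x,y)|,
\]
and I would bound each of the four pieces by the matching summand of $K_B(f;\lambda_x/m,\lambda_y/n)$ before taking the infimum over all such decompositions.

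Every piece starts from the identity $\Hat{{BY}}_{m,n}^{a}(\phi;x,y) - \phi(x,y) = -\Hat{Y}_{m,n,a}(\Delta\phi((t,s),(x,y));x,y)$, which follows from $\Hat{Y}_{m,n,a}(1;x,y)=1$ and the definition of $\Hat{{BY}}_{m,n}^{a}$. For the residual $r$, the trivial bound $|\Delta r|\le 4\|r\|$ combined with the fact that $\Hat{{BY}}_{m,n}^{a}$ reproduces every function of the form $\alpha(x)+\beta(y)$ yields $|\Hat{{BY}}_{m,n}^{a}(r;x,y) - r(x,y)| \le 2\|r\|$.

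For $g_1 \in D_B^{2,0}$ I would Taylor-expand $g_1$ in the first variable around $x$ inside $\Delta g_1((t,s),(x,y))$; after the cancellations induced by $\Delta$, only the integral remainder involving $\partial_x^2 g_1 = D_B^{2,0} g_1$ survives, and it is majorised by $(t-x)^2\|D_B^{2,0} g_1\|$. Applying $\Hat{Y}_{m,n,a}$ and invoking the moment bound in the remark after Lemma \ref{L3} gives $|\Hat{{BY}}_{m,n}^{a}(g_1;x,y) - g_1(x,y)| \le (\lambda_x/m)\|D_B^{2,0} g_1\|$, and symmetrically $|\Hat{{BY}}_{m,n}^{a}(g_2;x,y) - g_2(x,y)| \le (\lambda_y/n)\|D_B^{0,2} g_2\|$. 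For $h \in D_B^{2,2}$, I would iterate Taylor expansion in $x$ and then in $y$, producing a double integral remainder bounded by $(t-x)^2(s-y)^2\|D_B^{2,2} h\|$; applying $\Hat{Y}_{m,n,a}$ and using the product factorisation lemma stated just before the theorem together with the same second-moment bounds gives $|\Hat{{BY}}_{m,n}^{a}(h;x,y) - h(x,y)| \le (\lambda_x\lambda_y/mn)\|D_B^{2,2} h\|$. Summing the four estimates and taking the infimum over decompositions yields $|\Hat{{BY}}_{m,n}^{a}(f;x,y)-f(x,y)| \le 2\,K_B(f;\lambda_x/m,\lambda_y/n)$.

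The main obstacle is securing the correct constants in the bounds for the $g_i$ pieces. A direct Taylor expansion of $\Delta g_1$ in $x$ alone leaves a cross term $(t-x)[\partial_x g_1(x,s) - \partial_x g_1(x,y)]$ that is not controlled by $\|D_B^{2,0} g_1\|$ on its own; it must be eliminated using GBS exactness on functions $\alpha(x)+\beta(y)$. Equivalently, writing $\Hat{{BY}}_{m,n}^{a}(f) - f = -(I-L_m)(I-L_n)f$, where $L_m$ and $L_n$ denote the univariate Sz\'asz-Mirakjan factors of $\Hat{Y}_{m,n,a}$, makes transparent why the Taylor remainder of the $(I-L_m)$ factor couples only with the pure $x$-smoothness of $g_1$. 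Carrying out this cancellation carefully, and verifying its compatibility with the overall factor $2$ in the stated bound, is the delicate step; the rest is book-keeping with the moment estimates of Lemmas \ref{L3} and \ref{L5}.
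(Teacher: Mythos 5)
Your proposal follows essentially the same route as the paper's proof: the decomposition $f=(f-g_1-g_2-h)+g_1+g_2+h$, Taylor expansion of $g_1,g_2$ in one variable and of $h$ in both with the first-order terms annihilated by the GBS construction, the moment bounds from the remark after Lemma \ref{L3} and Lemma \ref{L5} together with the product factorisation, and a final infimum over decompositions. The cross term $(t-x)\left[D_B^{1,0}g_1(x,s)-D_B^{1,0}g_1(x,y)\right]$ that you single out as the delicate step is simply dropped without comment in the paper's computation (implicitly taking $D_B^{1,0}g_1$ to be independent of the second variable for $g_1\in D_B^{2,0}$), so your treatment is, if anything, the more careful one on that point.
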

 \begin{proof}
With the help of Taylor's formula for the function $g_1\in C_B^{2,0}(X_b)$, we obtain
\begin{eqnarray}
g_1(t,s)-g_1(x,y)=(t-x)D_B^{1,0}g_1(x,y)+\int\limits_x^t(t-\xi)D_B^{2,0}g_1(\xi,y)d\xi,
\end{eqnarray} 
Upon using the linearity and positivity properties of GBS-type operators and the definition of $\Hat{{BY}}_{m,n}^{a}(f;x,y)$, it gives 
\begin{eqnarray*}
|\Hat{{BY}}_{m,n}^{a}(g_1;x,y)-g_1(x,y)|&=& \Bigg|\Hat{Y}_{m,n,a}\Bigg( \int\limits_x^t(t-\xi)[D_B^{2,0}g_1(\xi,y)-D_B^{2,0}g_1(\xi,s)]d\xi;x,y\Bigg) \Bigg|\\
&\leq & \Hat{Y}_{m,n,a}\Bigg( \Bigg|\int\limits_x^t|(t-\xi)||D_B^{2,0}g_1(\xi,y)-D_B^{2,0}g_1(\xi,s)|d\xi;x,y\Bigg|\Bigg) \\
&\leq & \|D_B^{2,0}g_1\| \Hat{Y}_{m,n,a}((t-x)^2;x,y)\|< \|D_B^{2,0}g_1\| \frac{c}{m},
\end{eqnarray*}
 similarly, 
 \begin{eqnarray*}
 |\Hat{{BY}}_{m,n}^{a}(g_2;x,y)-g_2(x,y)|< \|D_B^{0,2}g_2\| \frac{d}{n},
 \end{eqnarray*}
let $g_2\in D_B^{0,2}$ then for $h\in\ D_B^{2,2}$, we have
\begin{eqnarray*}
h(t,s)-h(x,y)&=&(t-x)D_B^{1,0}h(x,y)+(s-y)D_B^{0,1}h(x,y)+(t-x)(s-y)D_B^{1,1}h(x,y)\\
&&+\int\limits_x^t(t-\xi)D_B^{2,0}h(\xi,y)d\xi+\int\limits_x^t(s-\phi)D_B^{0,2}h(x,\phi)d\phi+\int\limits_x^t(s-y)(t-\xi)D_B^{2,1}h(\xi,y)d\xi\\
&&+\int\limits_x^t(t-x)(s-\phi)D_B^{1,2}h(x,\phi)d\phi+ \int\limits_x^t\int\limits_y^s(t-\xi)(s-\phi)D_B^{2,2}h(\xi,\phi)d\xi d\phi.
\end{eqnarray*}  
By using the definition of the GBS-type operators $\Hat{{BY}}_{m,n}^{a}(f;x,y)$ of the defined operators (\ref{R1}), we have
 \begin{eqnarray}
 \Hat{{BY}}_{m,n}^{a}((t-x);x,y)=0,~ \Hat{{BY}}_{m,n}^{a}((s-y);x,y)=0,
 \end{eqnarray}
 in next step, we get 
 \begin{eqnarray*}
 |\Hat{{BY}}_{m,n}^{a}(h;x,y)-h(x,y)|&\leq &\Bigg|\Hat{BY}_{m,n,a} \Bigg(\int\limits_x^t\int\limits_y^s(t-\xi)(s-\phi)D_B^{2,2}h(\xi,\phi)d\xi d\phi;x,y \Bigg)\Bigg|\\ 
 &\leq & \Hat{BY}_{m,n,a} \Bigg(\int\limits_x^t\int\limits_y^s|(t-\xi)| |(s-\phi)|\Bigg|D_B^{2,2}h(\xi,\phi)\Bigg|d\xi d\phi;x,y \Bigg)\\
 &\leq & \frac{1}{4} \| D_B^{2,2}h\| \Hat{BY}_{m,n,a} ((t-x)^2(s-y)^2;x,y)\\
 &\leq & \| D_B^{2,2}h\|\frac{\lambda_x \lambda_y}{mn}.
\end{eqnarray*}
Now, 
\begin{eqnarray*}
|\Hat{{BY}}_{m,n}^{a}(f;x,y)-f(x,y)| &\leq & |(f-g_1-g_2-h)(x,y)| + \Bigg|\left(g_1-\Hat{{BY}}_{m,n}^{a}g_1 \right)(x,y) \Bigg|+\Bigg|\left(g_2-\Hat{{BY}}_{m,n}^{a}g_2 \right)(x,y) \Bigg| \\
&&+\Bigg|\left(h-\Hat{{BY}}_{m,n}^{a}h \right)(x,y) \Bigg|+ \Bigg|   \Hat{{BY}}_{m,n}^{a}((f-g_1-g_2-h);x,y)\Bigg|\\
&\leq & 2\|f-g_1-g_2-h \|+ \| D_B^{2,0}g_1\|\frac{\lambda_x}{m}+\| D_B^{0,2}g_2\|\frac{\lambda_y}{n}+\| D_B^{2,2}h\|\frac{\lambda_x \lambda_y}{mn},
\end{eqnarray*}
by taking infimum over for all $g_1\in C_B^{2,0}$, $g_2\in C_B^{0,2}$, $h\in C_B^{2,2}$, we get our desired result. 
\end{proof}
\section{Graphical approach and Convergence based discussion}
For validation of the results, the GBS-type operators are compared with the bivariate operators (\ref{R1}) and the rate of convergence is examined for finite sum over the interval $[0,1]$ as well as for infinite sum over the interval $[0,\infty)$ through  graphical representations along with their numerical approximation.

In this section, we discuss the behaviour of the operators with the  function $f(x,y)$ for particular values of $k_1,k_2$   and for an infinite series (i.e. for $k_1=0,1,\cdots, \infty$ and $k_2=0,1,\cdots, \infty$). Also, check the behaviour of the operators (\ref{R1}) and (\ref{R2}) by comparison.

\pagebreak
\begin{example}
Consider the function $f(x,y)=x\sin{\pi y}$ (green). For the particular value of $m=n=10$, $k_1=9=k_2$, the corresponding operators are represented by $\Hat{Y}_{10,10,a}(f;x,y)$(blue) and $\Hat{{BY}}_{10,10}^{a}(f;x,y)$(red) respectively. Upon considering the partitions as $x_0=0, x_1=\frac{1}{10}, \cdots, x_9=\frac{9}{10}$  of $[0,1]$ and $y_0=0,y_1=\frac{1}{10}, \cdots, y_9=\frac{9}{10}$ of $[0,1]$, the convergence approach of the operators $\Hat{Y}_{m,n,a}(f;x,y)$ and $\Hat{{BY}}_{m,n}^{a}(f;x,y)$ to the function and their comparison are shown in Figure \ref{F1}.

\begin{figure}[h!]
    \centering 
        \includegraphics[width=.32\textwidth]{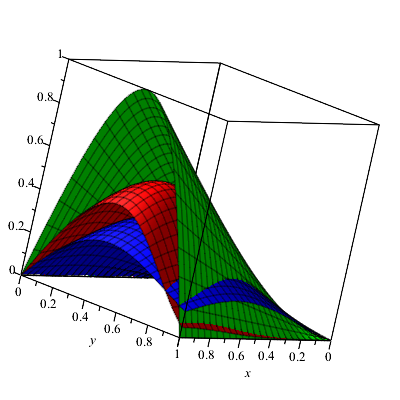}   
    \caption[Description in LOF, taken from~\cite{source}]{The comparison of the convergence approach of the operators $\Hat{Y}_{m,n,a}(f;x,y)$(blue) and $\Hat{{BY}}_{m,n}^{a}(f;x,y)$(red) to the function $f(x,y)$(green).}
    \label{F1}
\end{figure}
Now, we choose less numbers of partitions for the same function and for the same particular values of $m=n=10$. 
\begin{figure}[h!]
    \centering 
        \includegraphics[width=.32\textwidth]{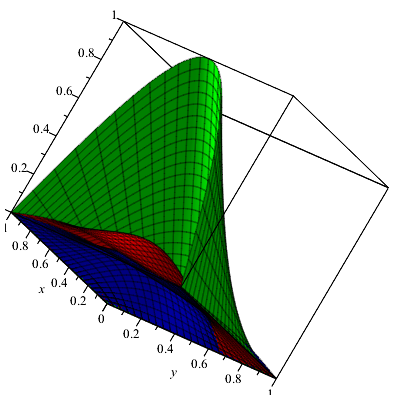}   
    \caption[Description in LOF, taken from~\cite{source}]{The comparison of the convergence approach  of the bivariate operators $\Hat{Y}_{m,n,a}(f;x,y)$ (blue) and $\Hat{{BY}}_{m,n}^{a}(f;x,y)$ (red) to the function $f(x,y)$ (green).}
    \label{F2}
\end{figure}
\end{example}

Here, we take the partitions within six terms like as $x_0=0,x_1=\frac{1}{10}, \cdots, x_5=\frac{5}{10}$ of $[0,1]$  and $y_0=0,y_1=\frac{1}{10}, \cdots, y_5=\frac{5}{10}$ of $[0,1]$ as shown in Figure \ref{F2}. It can be seen from Figure \ref{F2} that the error gap between the function and operators are maximum in Figure \ref{F2} rather than in Figure \ref{F1}.\\

Finally, it can be observed from Figures (\ref{F1}) and (\ref{F2}) that the accuracy approach of the GBS-type operators (\ref{R2}) to the function $f(x,y)$ is better than the bivariate operators (\ref{R1})  but it  depends on the number of partitions of $[0,1]$. By observing Figure \ref{F2} and Figure \ref{F1}, it can be seen that for large number of partitions i.e as the length of the partition be small, the approximation is better as compared to less number of partitions of the interval i.e for larger length of partitions. It can also be concluded that the approach of the operators to the function will be good upon using large number of partitions as compared to less numbers of partitions for the same interval. On other the hand, the approach of the GBS-type operators (\ref{R2}) is better than the bivariate operators (\ref{R1}). So, finally we can say that the convergence rate of the GBS-type operators is better than the convergence rate of the bivariate operators in any case.

\textbf{Remark:} In general, if we consider $[x_0,x_1],[x_1,x_2],\cdots,[x_{i-1},x_i]$ and $[y_0,y_1],[y_1,y_2],\cdots,[y_{j-1},y_j]$, are the sub-intervals of the $[0,1]$, provided each $x_i, y_j$ are the some form of $\frac{i}{m}, \frac{j}{n}$ respectively, where $i=1,2,\cdots, k_1,~j=1,2,\cdots, k_2$ while $k_1\leq m, k_2\leq n$, then the following concluding remarks can be obtained.\\
\textbf{Concluding Remark:} 
\begin{itemize}
\item{} If the number of sub-intervals are maximum i.e., the sub-length $x_i-x_{i-1}, y_j-y_{j-1}$ are small, then the approximation is good. 
\item{} If the number of sub-interval are minimum i.e., the sub-length $x_i-x_{i-1}, y_j-y_{j-1}$ are large, then the approximation is not good.
\end{itemize}

\textbf{Note:} In above both conditions, the approach of the GBS-type operators (\ref{R2}) to the function is better than the bivariate operators as defined by (\ref{R1}).

\begin{example}
Consider a function defined by $f(x,y)=x\sin{\pi y}$(green). For the particular value of $m=n=10$, the corresponding operators  $\Hat{Y}_{10,10,a}(f;x,y)$ and $\Hat{{BY}}_{10,10}^{a}(f;x,y)$ are shaded by blue and red colors respectively as given in Figure \ref{F3}. Here, it can be seen the approximation of the function defined by the operators (\ref{R1}), (\ref{R2}) and  the error determined by the GBS-type operators to the function is  minimum than the bivariate operators (\ref{R2}).
\begin{figure}[h!]
    \centering 
        \includegraphics[width=.32\textwidth]{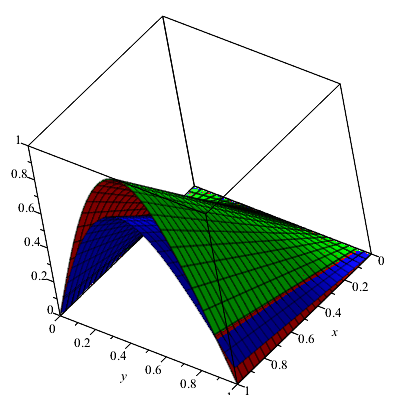}   
    \caption[Description in LOF, taken from~\cite{source}]{Comparison of the convergence for both bivariate operators $\Hat{Y}_{m,n,a}(f;x,y)$(blue) and $\Hat{{BY}}_{m,n}^{a}(f;x,y)$(red) to the function $f(x,y)$ (green).}
    \label{F3}
\end{figure}
\end{example}
\textbf{Concluding result:} From Figure \ref{F3}, it can be concluded that the convergence behavior of the GBS-type operators defined by (\ref{R2}) is better than the bivariate operators defined by (\ref{R1}).
\begin{example}
Consider a function $f(x,y)=\sin({x+y})$ (green). On choosing the value of $m=n=10, 15$ for the GBS-type operators, the corresponding GBS operators can be represented as $\Hat{{BY}}_{10,10}^{2}(f;x,y)$(blue), $\Hat{{BY}}_{15,15}^{2}(f;x,y)$(yellow). It can be observed from Figure \ref{F4} that the error becomes smaller as the value of $m$ and $n$ be increases. 
\begin{figure}[h!]
    \centering 
        \includegraphics[width=.32\textwidth]{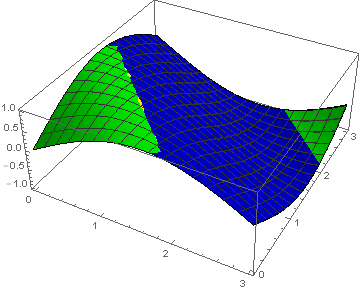}   
    \caption[Description in LOF, taken from~\cite{source}]{The convergence of the GBS-type operators $\Hat{{BY}}_{m,m}^{a}(f;x,y)$ to the function $f(x,y)$.}
    \label{F4}
\end{figure}

  From Figure \ref{F4}, it can be seen the convergence behaviour of the GBS-type operators with the small value of the parameters (as $m=n=10,15$) where as Figure \ref{F5} represents the convergence behaviour of the GBS-type operators $\Hat{{BY}}_{15,15}^{2}(f;x,y)$ for $m=n=15$ (yellow color) to the function in more closer form as compare to Figure \ref{F4}.
 

\begin{figure}[h!]
    \centering 
        \includegraphics[width=.32\textwidth]{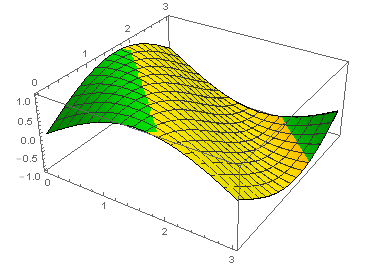}   
    \caption[Description in LOF, taken from~\cite{source}]{The convergence of the  GBS-type operator $\Hat{{BY}}_{15,15}^{2}(f;x,y)$(yellow) to the function $f(x,y)$(green).}
    \label{F5}
\end{figure}

\textbf{Concluding result:} It can be concluded from the graphical representations of the operators     
 $\Hat{{BY}}_{m,n}^{a}(f;x,y)$ and $\Hat{Y}_{m,n,a}(f;x,y)$ that the rate of convergence of the GBS-type operators (\ref{R2}) is better than the bivariate operators (\ref{R1}).

\end{example}
\subsection{Numerical approach}

Next, we discuss the absolute error of the GBS-type operators (\ref{R2}) as well as the bivariate operators (\ref{R1}) to the function $f(x,y)$ and compare these operators with their numerical errors at different points and for different values of $m,n$.\\

Let $G_{m,n}^a(f;x,y)=|\Hat{{BY}}_{m,n}^{a}(f;x,y)-f(x,y)|$  and $SY_{m,n,a}(f;x,y)=|\Hat{Y}_{m,n,a}(f;x,y)-f(x,y)|$, then the given Table \ref{Tab1}  represents the numerical approximations of  the GBS-type operators (\ref{R2}) and bivariate operators (\ref{R1}).

\begin{table}[h!]
\begin{tabular}{ |p{3cm}|p{5cm}|p{5cm}|  }
 \hline
 \multicolumn{3}{|c|}{Error in the approximation for GBS-type operators and bivariate operators  to the function $f(x,y)$} \\
 \hline
m=n      &  $SY_{m,n,a}(f;x,y)$ & $G_{m,n}^a(f;x,y)$ \\
 \hline
 10  &  0.00887548  & 0.0000358021\\
  \hline
15 &    0.00589734 & 0.0000160308 \\
 \hline
 25 &  0.0035283 & 5.80313$\times10^{-6}$ \\
 \hline
 50 &  0.00176016 & 1.45647$\times10^{-6}$ \\
 \hline
100 & 0.000879051 & 3.64803$\times10^{-7}$\\
\hline
\end{tabular}
\caption{A Comparison of the GBS-type operators and bivariate operators  to the function $f(x,y)$}\label{Tab1}
\end{table}

\textbf{Concluding remark:} From Table \ref{Tab1}, it can be observed that the approximation by the GBS-type operators (\ref{R2}) to the function is better than the bivariate operators (\ref{R1}).


\subsection{A comparison of the bivariate operators (\ref{R1}) with bivariate Kantorovich operators}

In this subsection, we show the graphical representation for the comparison of convregence of the bivariate operators (\ref{R1}) with the bivariate Kantorovich operators of Sz\'asz-Mirakjan. In 2006, Muraru \cite{MCV} gave a quantitative approximation of Kantorovich-Sz\'asz bivariate operators, defined as

\begin{eqnarray*}
\Hat{{K}}_{m,n}f : L_1([0,\infty) \times[0,\infty)) \to B([0,\infty)\times[0,\infty)),~(m,n)\in \mathbb{N}\times\mathbb{N};
\end{eqnarray*}

\begin{eqnarray}\label{kan}
\Hat{{K}}_{m,n}(f;x,y)=mne^{-mx-ny}\sum\limits_{k_1=0}^\infty\sum\limits_{k_2=0}^\infty \frac{(mx)^{k_1}}{k_1!}\frac{(nx)^{k_2}}{k_2!}\int\limits_{\frac{k_1}{m}}^{\frac{k_1+1}{m}}\int\limits_{\frac{k_2}{n}}^{\frac{k_2+1}{n}}f(u,v)~du dv.
\end{eqnarray}
There are following computational examples, which represent the comparison.

\begin{example}
Let the function $f(x)=x^2y(x-1)\sin(2\pi y)$ (green), for all $0\leq x, y\leq 2$ and choose the value of $m, n=10$, for which the bivariate  operators (yellow) defined by (\ref{R1}) show the better rate of convergence than the Kantorovich-Sz\'asz bivariate operators $\Hat{K}_{m,n}(f;x,y)$(red) defined by (\ref{kan}), graphical represenetation can be seen by the Figure \ref{F1}.

\begin{figure}[h!]
    \centering 
    \includegraphics[width=.42\textwidth]{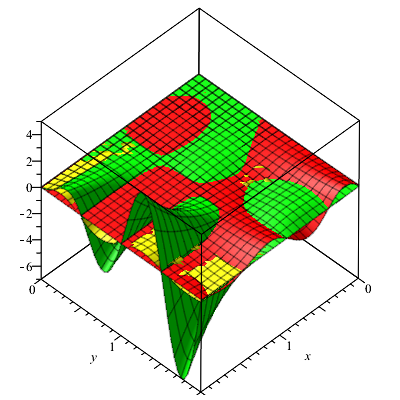}   
    \caption[Description in LOF, taken from~\cite{source}]{The comparison of the convergence of the operators $\Hat{Y}_{m,n,a}(f;x,y)$ (yellow) and $\Hat{K}_{m,n}(f;x,y)$ (red) to the function $f(x)$ (green)}
    \label{F1}
\end{figure}

\end{example}

\begin{example}
Consider the function $f(x)=x^2y\cos(\pi y)$ (green), for all $0\leq x, y\leq 4$ and choose $m, n=10$, for which the bivariate operators (yellow) defined by (\ref{R1}) present the
 better rate of convergence than the bivariate Kantorovich operators $\Hat{K}_{m,n}(f;x,y)$ (red)  defined by (\ref{kan}), the graphical representation is illustrated by Figure \ref{F2}.

\begin{figure}[h!]
    \centering 
    \includegraphics[width=.42\textwidth]{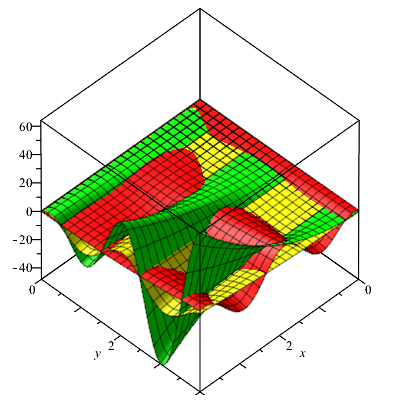}   
    \caption[Description in LOF, taken from~\cite{source}]{The comparison of the convergence of the operators $\Hat{Y}_{m,n,a}(f;x,y)$ (yellow) and $\Hat{K}_{m,n}(f;x,y)$ (red) to the function $f(x)$ (green)}
    \label{F2}
\end{figure}
\end{example}

\begin{example}
Let the function $f(x)=y^2\cos(2\pi x)$ (green), for all $0\leq x, y\leq 4$ and consider $m, n=20$, for which the graphical representation of the bivariate operators $\Hat{Y}_{m,n}(f;x,y)$  (yellow) defined by (\ref{R1}) and the bivariate Kantorovich operators $\Hat{K}_{m,n}(f;x,y)$ (red)  defined by (\ref{kan}) is illustrated in Figure \ref{F3}.
%
%

\begin{figure}[h!]
    \centering 
    \includegraphics[width=.42\textwidth]{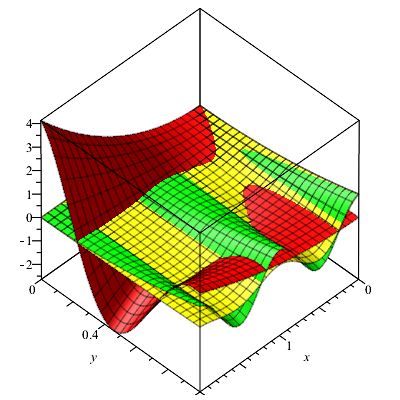}   
    \caption[Description in LOF, taken from~\cite{source}]{The comparison of the convergence of the operators $\Hat{Y}_{m,n,a}(f;x,y)$ (yellow) and $\Hat{K}_{m,n}(f;x,y)$ (red) to the function $f(x)$ (green)}
    \label{F3}
\end{figure}
\end{example}

\textbf{Concluding Remark:}  By the above figures (\ref{F1}, \ref{F2}, \ref{F3}), we can say the rate of convergence of the bivaraite operators $\Hat{Y}_{m,n,a}(f;x,y)$ (\ref{R1}) is better than biavraite Kantorovich operators $\Hat{K}_{m,n}(f;x,y)$  defined by (\ref{kan}).


\subsection{ Comparison of associated GBS operators with the GBS-type operators of an infinite sum}
In 2008, Pop \cite{PO3} introduced an associated GBS-type operators of the linear positive operators defined by an infinite sum, so called GBS operators of Mirakjan-Favard-Sz\'{a}sz and gave an approximation of the functions considered to be $B$-continuous and $B$-differentiable. The  defined associated GBS operators can be stated as:


Let $m,n\in\mathbb{N}$, the operators  $UL_{m,n}^*:E(I\times I)\to F(J\times J)$ are defined for any function $f\in E(I\times I)$ and for $(x,y)\in J\times J$ such that


\begin{eqnarray}\label{s1}
UL^*_{m,n}(f;x,y)&=& \sum\limits_{k_1=0}^{\infty}\sum\limits_{k_2=0}^{\infty}\psi_{m,k_1}(x)\psi_{n,k_2}(y)[f(x_{m,k_1},y)+f(x,x_{n,k_2})-f(x_{m,k_1},x_{n,k_2})], 
\end{eqnarray}
%
%
%

where $((x_{m,k_1})_{k_1\in\mathbb{N}_0})_{m\geq1}, ((x_{n,k_2})_{k_2\in\mathbb{N}_0})_{n\geq1}$ are the sequences of nodes and the functions $\psi_{m,k_1}:I\to\mathbb{R}, \psi_{n,k_2}:J\to\mathbb{R}$ with the properties, $\psi_{m,k_1}\geq 0, \psi_{n,k_2}\geq 0$, where $I,J\subset\mathbb{R}, I\cap J\neq\phi$. 
The above GBS-modification operators (\ref{s1}) are the GBS-form of the operators $L^*$-type operators \cite{PO3} and are given by:
\begin{eqnarray}\label{s2}
L^*_{m,n}(f;x,y)&=& \sum\limits_{k_1=0}^{\infty}\sum\limits_{k_2=0}^{\infty}\psi_{m,k_1}(x)\psi_{n,k_2}(y)f(x_{m,k_1},x_{n,k_2}),~~~(x,y)\in J\times J,
\end{eqnarray}
where $m,n\in\mathbb{N}$, $f\in E(I\times I)$ and $L_{m,n}^*:E(I\times I)\to F(J\times J)$.\\

For the particular case, Pop \cite{PO3} determined the convergence properties for the GBS operators of Mirakjan-Favard-Sz\'{a}sz. Here, if $\psi_{m,k_1}(x)=\frac{k_1}{m}, \psi_{n,k_2}(x)=\frac{k_2}{n}$ and $\psi_{m,k_1}=e^{-mx}\frac{(mx)^{k_1}}{k_1!}, \psi_{n,k_2}=e^{-ny}\frac{(ny)^{k_2}}{k_2!}$ then for $f\in C([0,\infty)\times[0,\infty))$, the above operators (\ref{s1}) can be reduced to GBS operators of  Mirakjan-Favard-Sz\'{a}sz, which can be defined as follows:

\begin{eqnarray}\label{s2}
US^*_{m,n}(f;x,y)&=& \sum\limits_{k_1=0}^{\infty}\sum\limits_{k_2=0}^{\infty}e^{-mx-ny}\frac{(mx)^{k_1}}{k_1!}\frac{(ny)^{k_2}}{k_2!}\left[f\left(\frac{k_1}{m},y\right)+f\left(x,\frac{k_2}{n}\right)-f\left(\frac{k_1}{m},\frac{k_1}{m}\right)\right]. 
\end{eqnarray}

This subsection is very crucial from the comparison point of view of the GBS type operators as defined by (\ref{R2}) with the GBS operators of the  Mirakjan-Favard-Sz\'{a}sz type (\ref{s2}), which is shown by the following examples.

\begin{example}
Let the function be defined by $f(x,y)=e^{x+y}$(green). A comparison for the convergence  of the GBS-type operators $\Hat{{BY}}_{m,n}^{a}(f;x,y)$ (red) with the GBS operators of Mirakjan-Favard-Sz\'{a}sz $US^*_{m,n}(f;x,y)$ (black) to the function $f(x,y)$  is illustrated in Figure \ref{F7} for $m=n=2$. It can be observed that the GBS-type operators defined by (\ref{R2}) have a better rate of convergence than the GBS operators of Mirakjan-Favard-Sz\'{a}sz as defined by (\ref{s2}).
\begin{figure}[h!]
    \centering 
        \includegraphics[width=.52\textwidth]{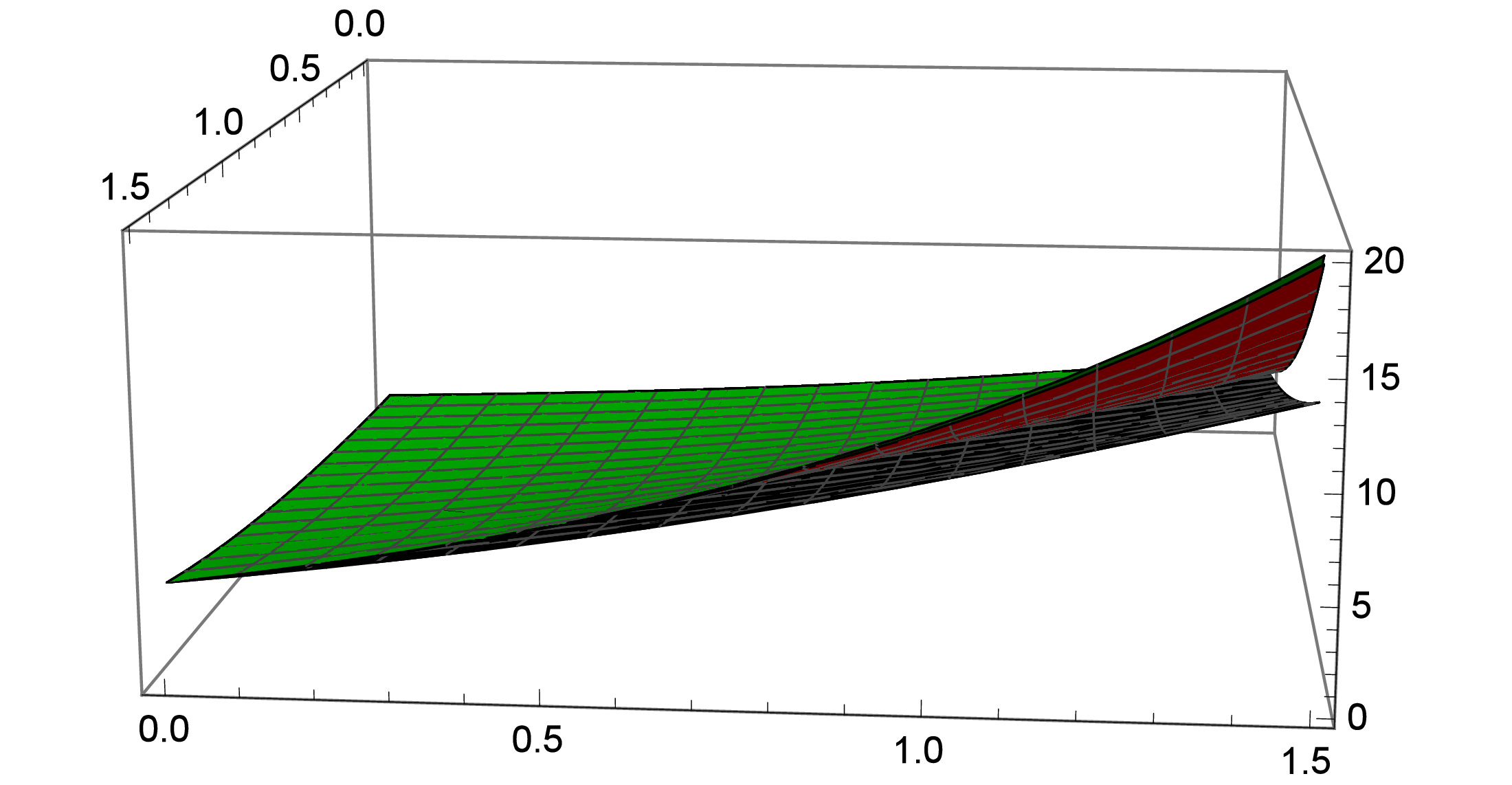}   
    \caption[Description in LOF, taken from~\cite{source}]{A comparison of the rate of convergence of the GBS-type operators $\Hat{{BY}}_{m,m}^{a}(f;x,y)$ and GBS operators of the Mirakjan-Favard-Sz\'{a}sz to the function $f(x,y)$.}
    \label{F7}
\end{figure}
\end{example}
\begin{example}
For the same function $f(x,y)=e^{x+y}$ and at a certain point $(x,y)$, the error estimation of the GBS-type operators $\Hat{{BY}}_{m,n}^{a}(f;x,y)$ and  GBS operators of Mirakjan-Favard-Sz\'{a}sz $US^*_{m,n}(f;x,y)$ has been computed in Table \ref{Tab2}.
\begin{table}[h!]
\begin{tabular}{ |p{3cm}|p{6cm}|p{6cm}|  }
 \hline
 \multicolumn{3}{|c|}{Error in the approximation  for $\Hat{{BY}}_{m,n}^{a}(f;x,y)$ and $US^*_{m,n}(f;x,y)$  to the function $f(x,y)$} \\
 \hline
m=n      &  $|US^*_{m,n}(f;x,y)-f(x,y)|$ & $|\Hat{{BY}}_{m,n}^{a}(f;x,y)-f(x,y)|$ \\
 \hline
 10  &  3.28277$\times 10^{-5}$  & 3.00798$\times 10^{-6}$\\
  \hline
20 &    7.91371$\times10^{-6}$ & 7.35189$\times10^{-7}$ \\
 \hline
 50 &  1.23907$\times10^{-6}$ & 1.16048$\times10^{-7}$ \\
 \hline
 100 &  3.07549$\times10^{-7}$ & 2.88814$\times10^{-8}$ \\
 \hline
\end{tabular}
\caption{A Comparison of the GBS-type operators $\Hat{{BY}}_{m,n}^{a}(f;x,y)$ and GBS operators of Mirakjan-Favard-Sz\'{a}sz $US^*_{m,n}(f;x,y)$  to the function $f(x,y)$}.\label{Tab2}
\end{table}

\end{example}
\textbf{Concluding Remark:} 
It can be concluded from Table \ref{Tab2} that the error arising in the approximation at a certain point by  GBS-type operators defined by (\ref{R2}) to the function  is much smaller than the GBS operators of Mirakjan-Favard-Sz\'{a}sz as defined by (\ref{s2}). Hence our GBS-type operators have a better rate of convergence than the GBS operators of Mirakjan-Favard-Sz\'{a}sz type.


\begin{thebibliography}{10}
\bibitem{g7} Abel U. On the asymptotic approximation with bivariate operators of Bleimann, Butzer, and Hahn. Journal of Approximation Theory. 1999 Mar 1;97(1):181-98.
\bibitem{g10} Agrawal PN, İspir N. Degree of approximation for bivariate Chlodowsky Sz$\acute{\text{a}}$sz-Charlier type operators. Results in Mathematics. 2016 Jun 1;69(3-4):369-85.
\bibitem{g8} Acar T, Aral A, Mohiuddine SA. Approximation by bivariate $(p, q)$-Bernstein-Kantorovich operators. Iranian Journal of Science and Technology, Transactions A: Science. 2018 Jun 1;42(2):655-62.
\bibitem{1} Agrawal PN, İspir N. Degree of approximation for bivariate Chlodowsky-Sz$\acute{\text{a}}$sz-Charlier type operators. Results in Mathematics. 2016 Jun 1;69(3-4):369-85.
\bibitem{2} Agrawal PN, Ispir N, Kajla A. GBS operators of Lupaş-Durrmeyer type based on P$\acute{\text{o}}$lya distribution. Results in Mathematics. 2016 Jun 1;69(3-4):397-418.
\bibitem{BK1} B$\ddot{\text{o}}$gel K.  Mehrdimensionale Differentiation von Funktionen mehrerer reellerVer$\ddot{\text{a}}$nderlichen. J. Reine Angew. Math. 170, 197-217 (1934).
\bibitem{BK2} B$\ddot{\text{o}}$gel K., $\ddot{\text{U}}$ber die mehrdimensionale differentiation, integration und beschr$\ddot{\text{a}}$nkte variation. J. Reine Angew. Math. 173, 5-29 (1935).
\bibitem{BCB1} Badea, C., Badea, I., Gonska, H.H., A test function theorem and approximation by pseudopolynomials. Bull. Aust. Math. Soc. 34, 53-64 (1986).
\bibitem{BCC}  Badea C, Cottin C. Korovkin-Type Theorems for Generalized Boolean Sum Operators, Approximation Theory (Kecskem$\acute{\text{e}}$t 1900), Colloq. Math. Soc. J$\acute{\text{a}}$nos Bolyai, North-Holland, Amsterdam, 58, 51-68 (1991).
\bibitem{BCB2} Badea C, Badea I, Cottin C, Gonska HH. Notes on the degree of approximation of $B$-continuous and $B$-differentiable functions. J. Approx. Theory Appl. 4, 95-108 (1988).	
\bibitem{BCGH1} Badea C, Cottin C, Gonska HH. B$\ddot{\text{o}}$gel Functions, Tensor Products and Blending Approximation. Mathematische Nachrichten. 1995;173(1):25-48.
\bibitem{BC} Badea C. $K$-functionals and moduli of smoothness of functions defined on compact metric spaces. Computers and Mathematics with Applications. 1995 Sep 1;30(3-6):23-31.
\bibitem{g2} B$\breve{\text{a}}$rbosu D. Some generalized bivariate Bernstein operators. Miskolc Mathematical Notes. 2000;1(1):3-10.
\bibitem{BP1} Baliarsingh P, Dutta S. On the classes of fractional order difference sequence spaces and their matrix transformations. Applied Mathematics and Computation. 2015 Jan 1;250:665-74.
\bibitem{BP2} Baliarsingh P, Nayak L. A note on fractional difference operators. Alexandria Engineering Journal. 2018 Jun 1;57(2):1051-4.
\bibitem{BP3} Baliarsingh P. On a fractional difference operator. Alexandria Engineering Journal. 2016 Jun 1;55(2):1811-6.
\bibitem{BP4} Baliarsingh P, Dutta S. On an explicit formula for inverse of triangular matrices. Journal of the Egyptian Mathematical Society. 2015 Jul 1;23(2):297-302.
\bibitem{BD1} Barbosu D. GBS operators of Schurer-Stancu type. Annals of the University of Craiova-Mathematics and Computer Science Series. 2003 Jan 1;30:34-9.
\bibitem{BD2} B$\breve{\text{a}}$rbosu D, B$\breve{\text{a}}$rbosu M. On the sequence of GBS operators of Stancu-type. Informatica. 2002;18(1):1-6.
\bibitem{3} B$\breve{\text{a}}$rbosu D, Muraru CV. Approximating B-continuous functions using GBS operators of Bernstein–Schurer–Stancu type based on q-integers. Applied Mathematics and Computation. 2015 May 15;259:80-7.
\bibitem{6} B$\breve{\text{a}}$rbosu D, Acu AM, Muraru CV. On certain GBS-Durrmeyer operators based on $ q $-integers. Turkish Journal of Mathematics. 2017 Apr 3;41(2):368-80.
\bibitem{CC} Cottin C. Mixed $K$-functionals: A measure of smoothness for blending-type approximation. Mathematische Zeitschrift. 1990 Dec 1;204(1):69-83.
\bibitem{7} Chauhan R, Ispir N, Agrawal PN. A new kind of Bernstein-Schurer-Stancu-Kantorovich-type operators based on $q$-integers. Journal of inequalities and applications. 2017 Dec 1;2017(1):50.
\bibitem{DSM} Das G, Srivastava VP, Mohapatra RN. On absolute summability factors of infinite series. J. Indian Math. Soc. 1967;31:189-200.
\bibitem{g3} Derriennic MM. On multivariate approximation by Bernstein-type polynomials. Journal of approximation theory. 1985 Oct 1;45(2):155-66.
\bibitem{g4} Do$\breve{\text{g}}$ru O, Gupta V. Korovkin-type approximation properties of bivariate $q$-Meyer-König and Zeller operators. Calcolo. 2006 Mar 1;43(1):51-63.
\bibitem{g6} Ditzian Z, Zhou X. Optimal approximation class for multivariate Bernstein operators. Pacific Journal of Mathematics. 1993 Mar 1;158(1):93-120.
\bibitem{g12} Duman O, Erku\c{s} E, Gupta V. Statistical rates on the multivariate approximation theory. Mathematical and computer modelling. 2006 Nov 1;44(9-10):763-70.
\bibitem{new} Dobrescu E, Matei I. The approximation by Berntein type polynomials of bidimensionally continuous functions. An. Univ. Timisoara Ser. Sti. Mat.-Fiz. no. 4, pp. 85-90 (1966) (Romanian).

\bibitem{17} Farcas MD. About approximation of $B$-continuous and $B$-differentiable functions of three variables by GBS operators of Bernstein type. Creat Math Inform 2008; 17: 20-27.
\bibitem{18} Farcas MD. About approximation of $B$-continuous functions of three variables by GBS operators of Bernstein type on a tetrahedron. Acta Univ Apulensis Math Inform 2008; 16: 93-102.
\bibitem{19} Farcas MD. About approximation of $B$-continuous and $B$-differential functions of three variables by GBS operators of Bernstein-Schurer type. Bul \c{S}tiin\c{t} Univ Politeh Timis Ser Mat Fiz 2007; 52: 13-22.
\bibitem{KMP} Kadak U, Mishra VN, Pandey S. Chlodowsky type generalization of $(p, q)$-Sz\'{a}sz operators involving Brenke type polynomials. Revista de la Real Academia de Ciencias Exactas, F\'{i}sicas y Naturales. Serie A. Matem\'{a}ticas. 2018 Oct 1;112(4):1443-62.
\bibitem{MCV} Muraru CV. Kantorovich-Szász bivariate operators, Stud.Cercet.Stiint., Ser.Mat., 16 (2006), Supplement Proceedings of ICMI 45, Bacau, Sept.18-20, 2006, pp. 169-176.
\bibitem{GM} Mirakjan GM. Approximation of continuous functions with the aid of polynomials $e^{-nx}\sum\limits_{k=0}^{m}C_{k,n}\chi^k,$ Dokl. Akad. Nauk SSSR, 31 (1941), pp. 201-205.

\bibitem{MD1} Miclaus D. On the GBS Bernstein-Stancu's type operators. Creat. Math. Inform. 2013;22(1):73-80.
\bibitem{RYM1} Mishra VN, Yadav R. Some estimations of summation-integral-type operators. Tbilisi Mathematical Journal. 2018;11(3):175-91.
\bibitem{MN} Mursaleen M, Nasiruzzaman M. Dunkl generalization of Kantorovich type Sz\'{a}sz–Mirakjan operators via $q$-calculus. Asian-European Journal of Mathematics. 2017 Dec 8;10(04):1750077.
\bibitem{MI} Manav. N, Ispir N. Approximation by blending type operators based on Sz$\acute{\text{a}}$sz-Lupa\c{s} basis functions. General Mathematics Vol. 24, No. 1-2 (2016), 105-119.
\bibitem{g9} \"{O}rkc\"{u} M. Approximation properties of bivariate extension of $q$-Sz$\acute{\text{a}}$sz-Mirakjan-Kantorovich operators. Journal of Inequalities and Applications. 2013 Dec 1;2013(1):324.
\bibitem{g11} \"{O}rkc\"{u} M. Sz$\acute{\text{a}}$sz-Mirakyan-Kantorovich Operators of Functions of Two Variables in Polynomial Weighted Spaces. In Abstract and applied analysis 2013 (Vol. 2013). Hindawi.
\bibitem{OS} O. Sz\'{a}sz, Generalization of S. Bernstein’s polynomials to the infinite interval, J. Res. Nat. Bur. Standards Sect. B. 45 (1950), pp. 239-245.
\bibitem{PO1} Pop OT. Approximation of $B$-differentiable functions by GBS operators. Anal. Univ. Oradea, Fasc. Matem. 2007;14:15-31.
\bibitem{PO2} Pop OT, Barbosu D. GBS operators of Durrmeyer-Stancu type. Miskolc Math Notes. 2008 Jan 1;9:53-60.
\bibitem{PO3} Pop OT. Approximation of $B$-continuous and $B$-differentiable functions by GBS operators defined by infinite sum. J. Inequal. Pure Appl. Math. 2009;10(1):1-7. 
\bibitem{RS} Ray BK, Sahoo AK. Application of the absolute Euler method to some series related to Fourier series and its conjugate series. In Proceedings of the Indian Academy of Sciences-Mathematical Sciences 1996 Feb 1 (Vol. 106, No. 1, pp. 13-38). Springer India.
\bibitem{4} Sidharth M, Ispir N, Agrawal PN. GBS operators of Bernstein-Schurer-Kantorovich type based on $q$-integers. Applied Mathematics and Computation. 2015 Oct 15;269:558-68.
\bibitem{5} Sidharth  M, Ispir N, Agrawal PN. Approximation of $B$-continuous and $B$-differentiable functions by GBS operators of $q$-Bernstein-Schurer-Stancu type, Turk J Math (2016) 40: 1298-1315, doi:10.3906/mat-1509-66. 
\bibitem{RYV1} Yadav R, Meher R, Mishra V.N., Results on bivariate Sz$\acute{\text{a}}$sz-Mirakjan type operators in  polynomial weight spaces, ArXiv preprint  arXiv:1911.08898v1 [math.NA] .


\bibitem{YMM} Yadav R, Meher R, Mishraa VN. Quantitative estimations of bivariate summation-integral-type operators. Mathematical Methods in the Applied Sciences. 2019 Aug 29 ;1-20. https://doi.org/10.1002/mma.5824.






\end{thebibliography}
\end{document}